\newtheorem{thrm}{Th\'eor\`eme}[section]
\newtheorem{lemme}[thrm]{Lemme}
\newtheorem{proposition}[thrm]{Proposition}
\newtheorem{corollaire}[thrm]{Corollaire}
\theoremstyle{definition}
\newtheorem{definition}[thrm]{D\'efinition}
\newtheorem{remarque}[thrm]{Remarque}
\numberwithin{equation}{section}
\DeclareMathOperator{\ric}{Ric}
\DeclareMathOperator{\scal}{Scal}
\DeclareMathOperator{\R}{R}
\DeclareMathOperator{\Se}{S}
\DeclareMathOperator{\W}{W}
\DeclareMathOperator{\Sc}{\it{P}}
\DeclareMathOperator{\B}{\it{B}}
\DeclareMathOperator{\Pa}{P}
\DeclareMathOperator{\tr}{tr}
\author{Vincent B\'erard}
\address{Institut de Math\'ematiques et Mod\'elisation de Montpellier \\
UMR 5149 CNRS - Universit\'e Montpellier II}
\email{vberard@math.univ-montp2.fr}
\begin{document}

\title[Les applications c--harmoniques]{Les applications conforme--harmoniques}

\begin{abstract}
Sur une surface de Riemann, l'\'energie d'une application \`a valeurs dans une vari\'et\'e riemannienne est une fonctionnelle invariante conforme, ses points critiques sont les applications harmoniques. Nous proposons ici un analogue en dimension sup\'erieure, en construisant une fonctionnelle invariante conforme pour les applications entre deux vari\'et\'es riemanniennes, dont la vari\'et\'e de d\'epart est de dimension $n$ paire. Ses points critiques satisfont une EDP elliptique d'ordre $n$ non--lin\'eaire qui est covariante conforme par rapport \`a la vari\'et\'e de d\'epart, on les appelle les applications conforme--harmoniques. Dans le cas des fonctions, on retrouve l'op\'erateur GJMS, dont le terme principal est une puissance $n/2$ du laplacien. Quand $n$ est impaire, les m\^emes id\'ees permettent de montrer que le terme constant dans le d\'eveloppement asymptotique de l'\'energie d'une application asymptotiquement harmonique sur une vari\'et\'e AHE est ind\'ependant du choix du repr\'esentant de l'infini conforme.
\end{abstract}

\maketitle

\section{Introduction}

Soient $(M,g)$ et $(N,h)$ deux vari\'et\'es riemanniennes de dimension $n$ et $m$, dans toute la suite, on consid\'erera que ces vari\'et\'es sont compactes et de classe $C^{\infty}$. On appelle \'energie des applications de $(M,g)$ dans $(N,h)$, la fonctionnelle $E_g$ d\'efinie de la mani\`ere suivante\,:
\begin{equation*}
 E_g(\varphi) = \frac{1}{2} \int_M |T\varphi|^2_{g,h} \,dvol_g,
\end{equation*}
o\`u $T\varphi$ d\'esigne l'application tangente de $\varphi$, qui est une section du fibr\'e des $1$--formes \`a valeurs dans les champs de vecteurs de $TN$ tir\'es--en--arri\`ere par $\varphi$, qu'on note $\Omega^1(M) \otimes \varphi^* TN$. Les applications harmoniques de $(M,g)$ dans $(N,h)$ sont d\'efinies comme les points critiques de l'\'energie et un r\'esultat classique les caract\'erise comme \'etant les solutions de l'\'equation $\delta^g T\varphi=0$, o\`u $\delta^g$ d\'esigne la divergence du fibr\'e $\Omega^1(M) \otimes \varphi^* TN$ construit canoniquement avec les connexions de Levi--Civita de $g$ et $h$ (celle de $h$ \'etant tir\'ee--en--arri\`ere par $\varphi$). Dans le cas des applications d'une surface \`a valeurs dans une vari\'et\'e riemannienne quelconque, il est connu que l'\'energie ne d\'epend que de la classe conforme de la m\'etrique de d\'epart (et bien sur de l'application et de la m\'etrique d'arriv\'ee), c'est--\`a--dire que pour deux m\'etriques conformes $g$ et $\overline{g}:=e^{2\omega}g$ sur une vari\'et\'e de dimension $2$, on a\,:
\begin{center}
 $E_{\overline{g}}(\varphi) = E_{g}(\varphi)$ et $\delta^{\overline{g}} T\varphi = e^{-2\omega} \delta^g T\varphi$.
\end{center}
Il faut remarquer que le laplacien est en g\'en\'eral un op\'erateur non--lin\'eaire, ainsi quand la vari\'et\'e $M$ est une surface, \^etre harmonique signifie que l'application en question est solution d'une \'equation non--lin\'eaire d'ordre $2$ qui est covariante par changement conforme de m\'etrique sur la surface. Par contre ce n'est plus le cas quand $M$ est de dimension strictement sup\'erieure \`a $2$, l'\'energie n'est plus un invariant conforme et on obtient alors \cite[1.159.i)]{MR867684}\,:
\begin{equation*}
 \delta^{\overline{g}} T\varphi = e^{-2\omega} \big(\delta^g T\varphi - (n-2) \langle d\omega,T\varphi \rangle_g \big).
\end{equation*}
L'harmonicit\'e n'est plus une propri\'et\'e g\'eom\'etrique de la classe conforme de $g$, mais bien de la m\'etrique $g$. 

On peut trouver dans la litt\'erature (voir \cite{MR1692148}, \cite{MR2301373} et les r\'ef\'erences cit\'ees) une g\'en\'eralisation des applications harmoniques qui est non--conforme, ce sont les applications biharmoniques, qui sont d\'efinies comme \'etant les points critiques de la bi\'energie\,:
\begin{equation*}
 E^2_g(\varphi) := \frac{1}{2} \int_M |\delta^g T\varphi|_g^2 \, dvol_g.
\end{equation*}
Il s'agit d'une classe d'applications qui englobent les applications harmoniques et qui permet, comme par exemple dans \cite{MR2124627}, de donner une nouvelle d\'emonstration du th\'eor\`eme d'Eells--Sampson sur l'existence d'applications harmoniques dans les classes d'homotopie. On sait que les applications harmoniques n'existent pas toujours (voir l'article \cite{MR0420708} d'Eells et Wood) et un des principaux objectifs de cette th\'eorie est de vouloir prouver l'existence d'applications biharmoniques dans ces cas l\`a. On peut citer encore les travaux de Baird et de Kamissoko dans \cite{MR1952859}, qui utilisent justement le fait que l'harmonicit\'e ne soit pas une notion invariante conforme en dimension sup\'erieure \`a $2$, pour exhiber des applications biharmoniques qui ne sont pas harmoniques. Nous nous poserons le m\^eme type de question et nous obtiendrons aussi un r\'esultat d'existence pour notre nouvelle classe d'applications.

\vskip 0.2cm

Le but de cet article est de d\'efinir une nouvelle notion d'harmonicit\'e pour les applications sur les vari\'et\'es de dimension paire qui soit invariante conforme, c'est--\`a--dire d\'efinir une fonctionnelle invariante conforme qui va jouer le r\^ole de l'\'energie et d\'eterminer l'\'equation de ses points critiques qui va remplacer la condition non--lin\'eaire d'annulation du laplacien. Si on se restreint aux fonctions sur les vari\'et\'es de dimension paire, Graham, Jenne, Mason et Sparling, ont d\'emontr\'e en 1987 dans \cite{MR1190438}, l'existence d'un op\'erateur diff\'erentiel covariant conforme de terme principal $\Delta^{n/2}$ sur les fonctions $C^{\infty}$ de $M$. En dimension $4$, il s'agit de l'op\'erateur de Paneitz $\Pa_4$\,:
\begin{equation*}
 \Pa_4 := \Delta^2 + \delta (\frac{2}{3}\scal - 2\ric)\, d.
\end{equation*}
Nous proposons de g\'en\'eraliser l'\'equation du noyau de cet op\'erateur sur des fonctions, en une \'equation aux d\'eriv\'ees partielles elliptique non--lin\'eaire d'ordre $n$ sur les applications $C^{\infty}$ de $(M,g)$ dans $(N,h)$ qui soit covariante conforme par rapport \`a $g$. De plus, nous construisons une fonctionnelle invariante conforme par rapport \`a $g$, dont les points critiques sont exactement les solutions de cette EDP. Bien que la d\'emonstration de l'existence de cette EDP suit les id\'ees de Graham, Jenne, Mason et Sparling en r\'esolvant un probl\`eme de Cauchy, la fonctionnelle s'obtient en renormalisant l'\'energie de la solution de ce probl\`eme \`a bord, en suivant l'id\'ee de Graham dans \cite{MR1758076} quand il d\'efinit son volume renormalis\'e. Nous obtenons le th\'eor\`eme suivant qui r\'esume les th\'eor\`emes \ref{theoreme_phi_pair} et \ref{theoreme_fonctionnelle_paire}\,:
\begin{thrm}
 Soit $(M^n,g)$ et $(N,h)$ deux vari\'et\'es riemanniennes, on suppose que $n$ est pair, alors il existe une fonctionnelle sur les applications de classe $C^{\infty}$ de $(M,g)$ dans $(N,h)$ qui est invariante conforme par rapport \`a $g$. De plus, l'\'equation de ses points critiques est une \'equation aux d\'eriv\'ees partielles elliptique non--lin\'eaire d'ordre $n$, qui est covariante conforme elle aussi par rapport \`a $g$.
\end{thrm}
Nous d\'efinissons les applications conforme--harmoniques de la mani\`ere suivante\,:
\begin{definition}
 On note $\mathcal{E}_g^n$ la fonctionnelle du th\'eor\`eme pr\'ec\'edent et on appelle ses points critiques, les applications conforme--harmoniques, qu'on abr\`ege en parlant d'applications C--harmoniques.
\end{definition}

On consid\`ere la vari\'et\'e $(M^n,g)$ comme \'etant l'infini conforme d'une vari\'et\'e $(X^{n+1},g_+)$ particuli\`ere. Il s'agit d'une g\'en\'eralisation du mod\`ele du disque de Poincar\'e, o\`u $(M,g)$ joue le r\^ole de la sph\`ere $\mathbb{S}^n$ munie de sa m\'etrique canonique et $(X,g_+)$ le r\^ole de la boule unit\'e de $\mathbb{R}^{n+1}$ munie de la m\'etrique hyperbolique, ce qui justifiera l'appellation m\'etrique de Poincar\'e de $(M,g)$ quand on parlera de $(X,g_+)$. L'\'equation des points critiques est obtenue comme une obstruction \`a r\'esoudre un probl\`eme de Cauchy d\'eg\'en\'er\'e sur $(X,g_+)$. On se donne une application $\varphi$ de $M$ dans $N$, il s'agit de d\'eterminer une application $\tilde{\varphi}$ qui soit $C^{\infty}$ de $\overline{X}$ dans $N$ qui v\'erifie les syst\`eme suivant\,:
\begin{center}
 $\left\{\begin{array}{ll}
  \tilde{\varphi}              & = \ \varphi \mbox{\ \ sur\ \ } M,\\
  \delta^{g_+}T\tilde{\varphi} & = \ 0       \mbox{\, \ sur\ \ } \overline{X}.
 \end{array}\right.$
\end{center} 
Sur les fonctions, Graham, Jenne, Mason et Sparling ont montr\'e qu'il n'\'etait pas toujours possible de r\'esoudre ce probl\`eme localement; quand la vari\'et\'e de d\'epart est de dimension paire, il existe un terme logarithmique non--trivial dans le d\'eveloppement formel de la solution pr\`es du bord qui obstrue la r\'egularit\'e de la r\'esolution. Ce terme est alors d\'efini comme l'op\'erateur GJMS d'ordre maximal en $\varphi$ qui ne d\'epend que de la classe conforme de $g$ et de la m\'etrique $h$. On va suivre la m\^eme id\'ee pour les applications, en identifiant localement la vari\'et\'e d'arriv\'ee avec son espace tangent, de mani\`ere \`a calculer le d\'eveloppement asymptotique de la compos\'ee de $\tilde{\varphi}$ avec l'exponentielle. Quand $M$ est de dimension paire, il y a un terme logarithmique qui appara\^it et qui ne d\'epend que de l'application de d\'epart et de la classe conforme de $g$ (et de la m\'etrique $h$), notre EDP est simplement la condition sur $\varphi$ que ce terme soit nul. Le d\'eveloppement asymptotique est enti\`erement d\'etermin\'e jusque--l\`a par $\varphi$ et des termes de courbures de nos deux vari\'et\'es $(M,g)$ et $(N,h)$, ce qui est \'equivalent \`a la donn\'ee de la valeur sur le bord des $n$ premi\`eres d\'eriv\'ees de la solution $\tilde{\varphi}$ par rapport \`a la coordonn\'ee radiale. Cela nous permet de calculer le d\'eveloppement asymptotique de l'\'energie dans un ruban $M \times [\rho;\varepsilon]$ quand $\rho$ tend vers $0$ qui admet un terme constant qui ne d\'epend que de $\varphi$ et de la classe conforme de $g$ (et de la m\'etrique $h$). On d\'efinit ce terme constant comme \'etant l'image de $\varphi$ par notre fonctionnelle et on montre ensuite, par une int\'egration par parties, que le gradient de cette fonctionnelle est bien le terme logarithmique pr\'ec\'edent.
 
\vskip 0.2cm

Du th\'eor\`eme pr\'ec\'edent, on obtient directement le r\'esultat suivant de rigidit\'e pour les applications harmoniques sur les boules de dimension impaire, qui se g\'en\'eralise au cas des vari\'et\'es asymptotiquement hyperboliques de dimension impaire (voir corollaire \ref{corollaire_rigidite_harmonique_bord}).
\begin{corollaire}
 On consid\`ere $(B,g_{hyp})$ la boule unit\'e ouverte de $\mathbb{R}^{n+1}$ de dimension impaire munie de la m\'etrique hyperbolique, $(S,[g_{can}])$ son infini conforme et $(N,h)$ une vari\'et\'e riemannienne, alors les applications qui sont de classe $C^n$ de $\overline{B}$ dans $N$ et harmonique de $(B,g_{hyp})$ dans $(N,h)$, v\'erifient le fait que leurs restrictions \`a $S$ est C--harmonique de $(S,[g_{can}])$ dans $(N,h)$.
\end{corollaire}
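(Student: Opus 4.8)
Le plan est de voir ce corollaire comme une cons\'equence directe de la description par obstruction des applications C--harmoniques \'etablie au th\'eor\`eme \ref{theoreme_phi_pair}. Rappelons que, lorsque $n$ est pair, une application $\varphi:M\to N$ est C--harmonique pour la classe conforme de $g$ si et seulement si le terme logarithmique d'ordre $\rho^n$ dans le d\'eveloppement asymptotique formel pr\`es du bord de la solution $\tilde\varphi$ du probl\`eme de Cauchy d\'eg\'en\'er\'e d\'efini par $\tilde\varphi|_M=\varphi$ et $\delta^{g_+}T\tilde\varphi=0$ sur $\overline X$ s'annule ; ce terme ne d\'epend que de $\varphi$, de la classe conforme de $g$ et de $h$, et, d'apr\`es le th\'eor\`eme \ref{theoreme_fonctionnelle_paire}, il co\"incide, \`a une constante multiplicative non nulle pr\`es, avec le gradient de la fonctionnelle $\mathcal{E}_g^n$.

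D'abord, je remarquerais que $(B,g_{hyp})$ est pr\'ecis\'ement la vari\'et\'e $(X,g_+)$ de l'introduction associ\'ee \`a $(S,[g_{can}])$ : c'est le mod\`ele de la boule de Poincar\'e, dont $g_{hyp}$ est la m\'etrique de Poincar\'e de la sph\`ere ronde, de sorte que le probl\`eme de Cauchy ci-dessus est exactement celui que l'on r\'esout sur $\overline B$. Pour la variante asymptotiquement hyperbolique (corollaire \ref{corollaire_rigidite_harmonique_bord}), on remplacerait $(B,g_{hyp})$ par une vari\'et\'e AHE $(X^{n+1},g_+)$ quelconque de dimension impaire, d'infini conforme $(M^n,[g])$ avec $n$ pair, en utilisant qu'une telle m\'etrique $g_+$ est une m\'etrique de Poincar\'e pour $[g]$ jusqu'\`a l'ordre qui intervient dans la construction.

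Ensuite, je partirais d'une application $\tilde\varphi\in C^n(\overline B,N)$ harmonique de $(B,g_{hyp})$ dans $(N,h)$, et je poserais $\varphi:=\tilde\varphi|_S$. Dans des coordonn\'ees normales de $N$ centr\'ees le long de $\varphi$ et pour une fonction de d\'efinition du bord $\rho$, les composantes de $\exp_\varphi^{-1}\circ\,\tilde\varphi$ admettent un polyn\^ome de Taylor de degr\'e $n$ le long de $S$. En injectant ce polyn\^ome dans l'\'equation $\delta^{g_+}T\tilde\varphi=0$ et en identifiant les puissances de $\rho$, on retrouve terme \`a terme la r\'ecurrence du th\'eor\`eme \ref{theoreme_phi_pair} : les coefficients de $\rho^0,\dots,\rho^{n-1}$ sont impos\'es et \'egaux \`a ceux de la solution formelle (ce sont des expressions universelles en $\varphi$ et les courbures de $g$ et $h$), tandis qu'\`a l'ordre $\rho^n$ le caract\`ere d\'eg\'en\'er\'e de l'\'equation le long de $S$ fait appara\^itre un terme $\rho^n\log\rho$ dont le coefficient est l'op\'erateur conforme--harmonique $\mathcal{P}^n_g(\varphi)$, c'est--\`a--dire le terme logarithmique du th\'eor\`eme. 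Or $\rho^n\log\rho\notin C^n(\overline B)$, puisque sa d\'eriv\'ee d'ordre $n$ en $\rho$ diverge comme $\log\rho$, alors que $\tilde\varphi$ est $C^n$ jusqu'en $S$ par hypoth\`ese ; ce terme est donc absent et $\mathcal{P}^n_g(\varphi)=0$. Par l'identit\'e de gradient du th\'eor\`eme \ref{theoreme_fonctionnelle_paire}, $\varphi$ est alors un point critique de $\mathcal{E}_{g_{can}}^n$, donc C--harmonique, ce qui est l'\'enonc\'e.

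Le point d\'elicat sera la justification rigoureuse de l'identification des ordres au rang critique $\rho^n$ sous la seule hypoth\`ese $\tilde\varphi\in C^n(\overline B)$ : il faut montrer que l'\'equation des applications harmoniques pour $g_+$, qui est uniform\'ement d\'eg\'en\'er\'ee ($0$--elliptique), contraint le jet d'ordre $n$ de $\tilde\varphi$ le long de $S$ \`a co\"incider avec la partie polynomiale de la solution formelle, afin que l'obstruction $\mathcal{P}^n_g(\varphi)$ apparaisse effectivement et doive donc s'annuler ; concr\`etement, il s'agit de v\'erifier qu'aucune contribution int\'erieure ni aucune ind\'etermination d'ordre inf\'erieur ne peut absorber le coefficient de $\rho^n\log\rho$, exactement comme pour l'op\'erateur GJMS scalaire dont les racines indicielles le long du bord sont $0$ et $n$. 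Une fois ce point acquis, le corollaire suit formellement, et le m\^eme argument, avec $(B,g_{hyp})$ remplac\'ee par une vari\'et\'e AHE de dimension impaire, donne le corollaire \ref{corollaire_rigidite_harmonique_bord}.
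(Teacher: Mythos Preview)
Ta démarche est essentiellement celle du papier : on applique le théorème \ref{theoreme_phi_pair} à une application $\tilde\varphi$ harmonique, donc satisfaisant a fortiori $\delta^{g_+}T\tilde\varphi=O(r^{n+1}\log r)$, ce qui impose le développement asymptotique avec le terme $H^g\,r^n\log r$ ; la régularité $C^n$ jusqu'au bord exclut ce terme, d'où $H^g(\varphi)=0$.

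Deux remarques de simplification. D'une part, le détour par l'identité de gradient du théorème \ref{theoreme_fonctionnelle_paire} est inutile : être C--harmonique est \emph{par définition} l'équation $H^g(\varphi)=0$, il n'y a pas à repasser par la fonctionnelle. D'autre part, ton ``point délicat'' n'en est pas un : la preuve du théorème \ref{theoreme_phi_pair} part précisément d'une application $\tilde\varphi$ quelconque vérifiant $\delta^{g_+}T\tilde\varphi=O(r^{n+1}\log r)$ et montre, par la récurrence \eqref{equation_rec_derives_phi}, que ses dérivées normales au bord jusqu'à l'ordre $n-1$ sont uniquement déterminées par $\varphi$, puis que le coefficient de $r^n\log r$ vaut $H^g(\varphi)$ via \eqref{equation_H^g}. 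L'unicité modulo $O(r^n)$ garantit donc que ton application harmonique a exactement ce développement, sans argument supplémentaire d'analyse $0$--elliptique ni de racines indicielles.
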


En dimension $2$, la fonctionnelle $\mathcal{E}_g^2$ est \'evidemment l'\'energie des applications de $(M^2,g)$ dans $(N,h)$ et les applications C--harmoniques sont exactement les applications harmoniques. En dimension $4$, le th\'eor\`eme \ref{theoreme-fonctionnelle-dimension4} nous donne une expression explicite de la fonctionnelle $\mathcal{E}_g^4$ en terme de courbures de $g$ et de l'\'equation de ses points critiques en termes de courbures de $g$ et de $h$\,:
\begin{thrm}
 Quand $M$ est de dimension $4$,
 \begin{equation*}
 \mathcal{E}_g^4(\varphi) = \int_M \big( |\delta T\varphi|^2_h + \frac{2}{3}\scal|T\varphi|^2_{g,h}
    - 2\,\ric (T\varphi,T\varphi) \big) \,dvol,
 \end{equation*}
 o\`u $\scal$ et $\ric$ d\'esignent respectivement la courbure scalaire et le tenseur de Ricci de $g$ et $dvol$ la forme volume de $g$. Notons $\Se$ l'endomorphisme de $\varphi^* TN$ d\'efini de la mani\`ere suivante\,:
 \begin{equation*}
  \Se(X) = \sum_{i=1}^4 \R^h_{\textstyle{X,T\varphi(e_i)}} T\varphi(e_i),
 \end{equation*}
 o\`u $(e_1,\ldots,e_4)$ est une base orthonorm\'ee de $TM$ par rapport \`a $g$ et $\R^h$ est le tenseur de courbure de $(N,h)$, alors l'\'equation de ses points critiques s'\'ecrit\,:
 \begin{equation*}
  \delta d\delta T\varphi + \delta \big((\frac{2}{3}\scal - 2\ric)\, T\varphi\big) - \Se(\delta T\varphi) = 0.
 \end{equation*}
\end{thrm}
Quand $M$ est de dimension $6$, la condition de C--harmonicit\'e et la fonctionnelle $\mathcal{E}_g^6$ sont explicit\'ees dans le th\'eor\`eme \ref{theoreme-fonctionnelle-dimension6-einstein} sous certaines conditions de courbures de nos deux vari\'et\'es. Sans ces hypoth\`eses, les calculs deviennent rapidement compliqu\'es et on est amen\'e alors \`a faire des hypoth\`eses sur l'application, comme par exemple regarder simplement l'identit\'e (voir th\'eor\`eme \ref{theoreme-fonctionnelle-dimension6-id}). 

\vskip 0.2cm

Si la vari\'et\'e de d\'epart est une vari\'et\'e d'Einstein de dimension paire quelconque, l'expression de sa m\'etrique de Poincar\'e est simple, la proposition \ref{proposition_einstein_C-harmonique} montre alors que les applications harmoniques sont C--harmoniques et la proposition \ref{proposition_fonctionnelle_einstein_harmonique} calcule explicitement leurs images par notre fonctionnelle. Cependant, la condition de C--harmonicit\'e reste encore compliqu\'ee \`a obtenir. 
Quand $M$ est dimension $4$, il existe certaines conditions de courbures sur nos deux vari\'et\'es, pour lesquelles les applications C--harmoniques sont alors exactement les applications harmoniques (voir la proposition \ref{proposition_harmonique=C-harmonique_dim4}), qui sont alors exactement les applications totalement g\'eod\'esiques, d'apr\`es une proposition due \`a Eells et Sampson dans \cite{MR0164306}.
L'identit\'e est toujours une application harmonique de $(M,g)$ dans $(M,g)$, ainsi elle est C--harmonique quand $M$ est de dimension $2$. Cela n'est plus le cas en dimension sup\'erieure, en dimension $4$, l'identit\'e est C--harmonique si et seulement si $g$ est \`a courbure constante (voir corollaire \ref{corollaire_id_dim4}).
Ce r\'esultat nous sert de point de d\'epart \`a la construction d'une application C--harmonique de $(M,[g])$ dans $(N,h)$, qui ne soit pas trivialement harmonique, c'est--\`a--dire qui ne soit pas harmonique de $(M,\overline{g})$ dans $(N,h)$, pour n'importe quelle m\'etrique $\overline{g}$ dans la classe conforme de $g$.
On se donne une vari\'et\'e $M^4$ munie d'une m\'etrique $h$ \`a courbure scalaire constante n\'egative proche d'une m\'etrique d'Einstein et on regarde les applications de $M$ dans $M$.
On va montrer que fixer la m\'etrique $h$ dans la vari\'et\'e d'arriv\'ee et d\'eformer judicieusement la m\'etrique de la vari\'et\'e de d\'epart, permet de construire une application proche de l'identit\'e qui conserve la C--harmonicit\'e, mais qui n'est plus harmonique, pour n'importe quel changement conforme de m\'etrique par rapport \`a la vari\'et\'e de d\'epart.
On obtient le th\'eor\`eme suivant (on pourra consulter le th\'eor\`eme \ref{theoreme_non_triviale} pour avoir un \'enonc\'e plus pr\'ecis)\,:

\begin{thrm}
 Soit $(M,g_e)$ une vari\'et\'e d'Einstein de dimension $4$ \`a courbure scalaire n\'egative, alors pour toute m\'etrique $h$ suffisamment proche de $g_e$ il existe une application $\varphi$ de $M$ dans $M$ et deux m\'etriques $g$ et $h$ proches de $g_e$ telles que\,:
 \begin{enumerate}
 \item $\varphi$ est C--harmonique de $(M,[g])$ dans $(M,h)$,
 \item $\varphi$ est non--harmonique de $(M,\overline{g})$ dans $(M,h)$, $\forall\overline{g} \in [g]$.
 \end{enumerate} 
\end{thrm}

Quand $n$ est impair, on a toujours une notion de m\'etrique de Poincar\'e de $(M,[g])$ et les premiers termes du d\'eveloppement formel de la solution de notre probl\`eme de Cauchy sont encore enti\`erement d\'etermin\'es pas nos conditions initiales, mais on a plus d'obstruction sous la forme d'un terme logarithmique et donc \`a priori plus de terme covariant conforme pour construire notre fonctionnelle.
De plus, le terme constant dans le d\'eveloppement asymptotique de l'\'energie d'une solution de notre probl\`eme \`a bord d\'epend alors de la connaissance de toute la solution et plus seulement de sa valeur au bord comme dans le cas pair, ce qui nous oblige \`a travailler avec des applications d\'efinies sur des vari\'et\'es asymptotiquement hyperbolique de bord \`a l'infini $(M,[g])$.
On obtient alors comme r\'esultat que le terme constant dans le d\'eveloppement asymptotique de l'\'energie d'une application asymptotiquement harmonique d'une vari\'et\'e asymptotiquement hyperbolique d'Einstein $(X,g_+)$ dans une vari\'et\'e riemannienne est ind\'ependant du choix de la m\'etrique dans l'infini conforme de $(X,g_+)$.
En outre, la variation infinit\'esimale de cette \'energie renormalis\'ee ne d\'epend que du premier terme du d\'eveloppement asymptotique de la solution qui d\'epend aussi de l'int\'erieur de la vari\'et\'e $X$ (voir le th\'eor\`eme \ref{theoreme_fonctionnelle_impaire} pour plus de d\'etails).

Afin de compl\'eter ces r\'esultats d'existence, citons le th\'eor\`eme obtenu r\'ecemment par Biquard et Madani dans \cite{2011arXiv1112.6130B}. Il s'agit d'un analogue conforme en dimension $4$, d'un c\'el\`ebre th\'eor\`eme d'Eells et Sampson (voir \cite{MR0164306}). Sous certaines hypoth\`eses de courbures de $(M^4,g)$ et $(N,h)$, ils prouvent l'existence d'une application C--harmonique dans chaque classe d'homotopie de $C^{\infty}(M,N)$.
\vskip 0.2cm

Cet article apporte les preuves et compl\`ete les r\'esultats annonc\'es dans la note parue aux Comptes Rendus Math\'ematiques de l'Acad\'emie des Sciences 
\cite{MR2449641}.

\section{La m\'etrique de Poincar\'e}

Soit $(X^{n+1},g_+)$ une vari\'et\'e non--compacte, on note $M$ le bord de son adh\'erence et on appelle fonction g\'eod\'esique d\'efinissant le bord de $X$, toute fonction $r$ de $\overline{X}$ v\'erifiant $r=0$ sur $M$, $r>0$ sur $X$ et $dr\neq0$ sur $M$. Notre m\'etrique $g_+$ est dite asymptotiquement hyperbolique (AH), s'il existe une fonction g\'eod\'esique $r$ telle que la m\'etrique $r^2g_+$ se prolonge en une m\'etrique non--d\'eg\'en\'er\'ee sur $\overline{X}$ et si ses courbures sectionnelles tendent vers $-1$ \`a l'infini. Il est facile de voir que cette derni\`ere hypoth\`ese est \'equivalente \`a $|dr|_{r^2g_+}=1$ sur $M$, qui est une condition qui d\'epend seulement de $g$ et pas du choix de la fonction g\'eod\'esique $r$. La donn\'ee d'une telle m\'etrique d\'etermine une classe conforme de m\'etrique sur le bord appel\'e infini conforme. Avec nos notations, l'infini conforme de $(X,g_+)$ est la classe conforme de la m\'etrique $r^2g_+$ restreinte \`a $TM$. Un th\'eor\`eme de Graham (\cite{MR1758076}) permet d'associer \`a chaque m\'etrique $g$ dans l'infini conforme d'une vari\'et\'e AH $(X,g_+)$, une unique (dans un voisinage de $M$) fonction g\'eod\'esique $r$ v\'erifiant\,:
\begin{equation}
 \label{forme normale}
 g_+ = \frac{dr^2 + g_r}{r^2},
\end{equation}
o\`u $g_r$ est une famille \`a $1$--param\`etre de m\'etriques sur $\partial\overline{X}$ v\'erifiant $g_0=g$.
Ce r\'esultat est le premier pas vers une g\'en\'eralisation du mod\`ele du disque de Poincar\'e, en vue de d\'eterminer une corr\'elation entre la g\'eom\'etrie de l'int\'erieur d'une vari\'et\'e et la g\'eom\'etrie conforme de son bord. Cependant, les \'equations sont trop souples et on rigidifie la situation en prenant une m\'etrique AH qui soit d'Einstein (AHE). Fefferman et Graham ont obtenu le th\'eor\`eme suivant\,:
\begin{thrm}[Fefferman--Graham \cite{MR837196}]
 \label{theoreme_g+}
 On se donne $(X,g_+)$ une vari\'et\'e AHE de dimension $n+1$ de bord \`a l'infini $M$, $g$ un repr\'esentant de son infini conforme et on \'ecrit $g_+$ sous la forme (\ref{forme normale}). Alors $g_r$ admet le d\'eveloppement asymptotique en $r=0$ suivant si $n$ est pair\,:
 \begin{equation}
  g_r = g + g_{(2)} r^2 + \cdots + g_{(n-2)} r^{n-2} + h \, r^n \log{r} + g_{(n)} r^n + O(r^{n+1}),
 \end{equation}
 et le d\'eveloppement suivant si $n$ est impair\,:
 \begin{equation}
  g_r = g + g_{(2)} r^2 + \cdots + g^{(n-1)} r^{n-1} + g_{(n)} r^n + O(r^{n+1}).
 \end{equation}
 Ces d\'eveloppements sont compos\'es de termes en puissance paires de $r$ jusqu'\`a l'ordre $n$ et un terme logarithmique $h$ dans le cas pair qui sont uniquement d\'etermin\'es par des termes de courbures de $g$, ainsi que la trace de $g_{(n)}$ par rapport \`a $g$ (elle est m\^eme nulle si $n$ est impair). De plus $h$ ne d\'epend que de la classe conforme de $g$.
\end{thrm}

\begin{remarque}
 Le terme $h$ dans le d\'eveloppement asymptotique de $g_r$ est, \`a un coefficient multiplicatif pr\`es, le tenseur d'obstruction de Graham et  Hirachi \cite{MR2160867}.
\end{remarque}
L'exemple de base est bien entendu le mod\`ele du disque de Poincar\'e, la sph\`ere $\mathbb{S}^n$ est vue comme l'infini conforme de l'espace hyperbolique $\mathbb{H}^{n+1}$, avec $g_r = \frac{1}{4} (1-r^2)^2 g_{\mathbb{S}}$, o\`u $r = \frac{1-|x|}{1+|x|}$ et $g_{\mathbb{S}}$ est la m\'etrique canonique de $\mathbb{S}^n$. Pour les vari\'et\'es AHE qui poss\`ede une m\'etrique $g$ dans son infini conforme qui v\'erifie la condition d'Einstein $\ric^g=4\lambda(n-1)g$, alors on obtient\,:
\begin{equation}
 \label{g_+_einstein}
 g_r = (1-\lambda\,r^2)^2g.
\end{equation}
On appelle $(X,g_+)$ une vari\'et\'e de Poincar\'e--Einstein de $(M,[g])$, une vari\'et\'e AH d'infini conforme $(M,[g])$, qui v\'erifie que $g_+$ s'\'ecrive sous la forme (\ref{forme normale}) o\`u $g_r$ admet le m\^eme d\'eveloppement formel que dans le th\'eor\`eme \ref{theoreme_g+}. Ainsi une m\'etrique de Poincar\'e--Einstein v\'erifie une condition d'Einstein asymptotique\,:
\begin{align*}
 \ric^{g_+}\! +\, n\,g_+ &= O(r^{n-1}\log{r}) \mbox{, si\ } n \mbox{\ est pair,} \\
 \ric^{g_+}\! +\, n\,g_+ &= O(r^n) \mbox{, si\ }n \mbox{\ est impair.}
\end{align*}
   
\begin{remarque}
 Le fait de d\'eterminer la m\'etrique de Poincar\'e--Einstein de $(M,[g])$ est \'equivalent \`a un autre probl\`eme \`a bord; celui de d\'eterminer la m\'etrique ambiante de $(M,[g])$. On pourra consulter a ce sujet \cite{MR1112625} et \cite{MR837196}.
\end{remarque}
Donnons quelques exemples de m\'etrique de Poincar\'e en basse dimension, pour cela d\'efinissons quelques tenseurs classiques de g\'eom\'etrie riemannienne. Soit $(M,g)$ une vari\'et\'e riemannienne de dimension $n$, on appelle tenseur de Schouten de $g$, le tenseur $\Sc_n$ suivant\,:
\begin{equation}
 \label{tenseur-de-schouten}
  \Sc := \frac{1}{n-2}\ric - \frac{\scal}{2(n-1)(n-2)}\, g,
 \end{equation}
o\`u $\ric$ et $\scal$ se rapportent \`a $g$. On note $\W$ le tenseur de Weyl de $g$ et $(e_1,\ldots,e_n)$ une base orthonorm\'ee de $TM$ par rapport \`a $g$, on d\'efinit $\B$ le tenseur de Bach de $g$ de la mani\`ere suivante\,:
\begin{equation*}
 \B(X,Y) := \sum_{k=1}^n (\nabla_{\textstyle{e_k}} \nabla_{\textstyle{e_k}}\Sc) (X,Y) - (\nabla_{\textstyle{e_k}} \nabla_{\textstyle{Y}}\Sc) (X,e_k) - \Sc(\W_{\textstyle{e_k,X}} Y,e_k),
\end{equation*}
o\`u $X$ et $Y$ sont deux champs de vecteurs de $TM$. Quand $n=2$, la m\'etrique $g_r$ admet le d\'eveloppement asymptotique suivant\,:
\begin{equation}
 \label{met_poincare_2}
 g_r = g + g_{(2)} r^2 + O(r^3) \mbox{\ avec\ } \tr g_{(2)} = - \frac{1}{2}\scal^g,
\end{equation}
o\`u $\tr$ d\'esigne la trace par rapport \`a $g$. Pour $n=4$, on obtient \,:
\begin{equation}
 \label{met_poincare_4}
 g_r = g - \Sc r^2 - \frac{1}{3}\B r^4\log{r} + g_{(4)}\,r^4 + O(r^5) \mbox{\ avec\ } \tr g_{(4)} = \frac{1}{4}(\tr \Sc\!\circ\!\Sc),
\end{equation}
on retrouve le fait que le tenseur de Bach est covariant conforme en dimension $4$. Si $n=6$, alors la m\'etrique $g_r$ s'\'ecrit dans un voisinage du bord\,:
 \begin{equation}
  \label{met_poincare_6}
  g_r = g - \Sc r^2 + \big(\frac{1}{4} \Sc\circ\Sc - \frac{1}{8} \B\big) \,r^4 + h\,r^6\log{r} + g_{(6)}\,r^6 + O(r^7).
 \end{equation}
Pour plus de d\'etails sur la m\'etrique de Poincar\'e et les applications qui en d\'ecoulent, on pourra consulter l'excellent livre \cite{MR2650244}.

\section{Les applications conforme--harmoniques}

On munit notre vari\'et\'e compacte $M^n$ d'une structure conforme $[g]$ et on note $g_+=r^{-2}(dr^2+g_r)$ sa m\'etrique de Poincar\'e d\'efinie sur $X = M \times ]0,\epsilon[$.  Il convient de remarquer que $g_+$ explose pour $r=0$, cependant on peut quand m\^eme d\'efinir le laplacien pour les applications de $(\overline{X},g_+)$ sur $(N,h)$. On se donne $\varphi$ une application $C^{\infty}$ de $M$ dans $N$, notre probl\`eme \`a bord est de d\'eterminer $\tilde{\varphi}$ une application $C^{\infty}$ de $\overline{X}$ dans $N$ qui soit solution du syst\`eme suivant\,:
\begin{center}
 \label{systeme}
 $\left\{\begin{array}{ll}
  \tilde{\varphi}_{|r=0} & = \varphi \\ \delta^{g_+}T{\tilde{\varphi}} & = 0.
  \end{array}
 \right.$
\end{center}
Notons $p_M$ la projection de $M \times [0,1]$ sur $M$, gr\^ace \`a l'exponentielle, on va identifier localement notre vari\'et\'e d'arriv\'ee $N$, avec le fibr\'e $(\varphi \circ p_M)^*TN$, de mani\`ere \`a faire un d\'eveloppement asymptotique sur ce fibr\'e. Quand la dimension de $M$ est paire, on obtient le th\'eor\`eme suivant\,:
\begin{thrm}
 \label{theoreme_phi_pair}
 Supposons que $n$ soit un entier pair, on se donne $(M^n,g)$ et $(N,h)$ deux vari\'et\'es riemanniennes et on note $(X,g_+)$ la m\'etrique de Poincar\'e de $(M,g)$. On \'ecrit $g_+$ sous la forme (\ref{forme normale}) et on se donne $\varphi$ une application $C^{\infty}$ de $(M,g)$ dans $(N,h)$, alors il existe une unique section $U$ de $(\varphi \circ p_M)^*TN$ modulo $O(r^n)$ d\'efinie dans un voisinage de $M$ dans $X$, telle que l'application $\tilde{\varphi} := (\exp_{\varphi\,\circ\,p_M}) \circ U$ soit solution du syst\`eme suivant\,:
 \begin{center}
  $\left\{\begin{array}{ll}
   \tilde{\varphi}_{|r=0} & = \varphi \\
   \delta^{g_+}T\tilde{\varphi} & = O(r^{n+1}\log{r}).
  \end{array}\right.$
 \end{center} 
 
 Plus pr\'ecis\'ement, $U$ admet le d\'eveloppement asymptotique en $r=0$ suivant\,:
 \begin{equation}
  U = U_2\, r^2 + \cdots + U_{n-2}\, r^{n-2} + H^g\, r^n \log{r} + U_n\, r^n + \ldots,
 \end{equation}
 o\`u les premiers points d\'esignent des termes en puissances de $r$ paires qui sont enti\`erement d\'etermin\'es par $\varphi$ et des termes de courbures de $g$ et de $h$. Le terme $H^g$ ne d\'epend que de $\varphi$ et de $[g]$ et l'\'equation $H^g(\varphi)=0$ est une \'equation aux d\'eriv\'ees partielles elliptique non--lin\'eaire d'ordre $n$ sur des applications de $(M^n,g)$ dans $(N,h)$, qui est covariante conforme par rapport \`a $g$.
 
 En outre, notre terme $H^g(\varphi)$ est de la forme suivante\,:
 \begin{equation*}
  H^g(\varphi) = a_n\, (\delta^g d)^{n/2-1} \delta^g T\varphi + \mbox{des d\'eriv\'ees de $\varphi$ d'ordre inf\'erieurs},
 \end{equation*}
 o\`u $a_n := \frac{(-1)^{n/2-1}}{2^{n-1} (n/2)! (n/2-1)!}$ et v\'erifie $H^{\overline{g}}(\varphi) = e^{-n\omega} H^g(\varphi)$ pour $\overline{g} = e^{2\omega} g$.
\end{thrm}

Nous avons le th\'eor\`eme suivant quand la dimension de $M$ est impaire\,:
\begin{thrm}
 \label{theoreme_phi_impair}
 Supposons que $n$ soit impair, on se donne $(M^n,g)$ et $(N,h)$ deux vari\'et\'es riemanniennes et on note $(X,g_+)$ la m\'etrique de Poincar\'e de $(M,g)$. On \'ecrit $g_+$ sous la forme (\ref{forme normale}) et on se donne $\varphi$ une application $C^{\infty}$ de $(M,g)$ dans $(N,h)$, alors il existe une unique section $U$ de $(\varphi \circ p_M)^*TN$ modulo $O(r^n)$ d\'efinie dans un voisinage de $M$ dans $X$, telle que l'application $\tilde{\varphi} := (\exp_{\varphi\,\circ\,p_M}) \circ U$ soit solution du syst\`eme suivant\,:
 \begin{center}
  $\left\{\begin{array}{ll}
   \tilde{\varphi}_{|r=0} & = \varphi \\
   \delta^{g_+}T\tilde{\varphi} & = O(r^{n+1}).
  \end{array}\right.$
 \end{center} 
 
 Plus pr\'ecis\'ement, $U$ admet le d\'eveloppement asymptotique en $r=0$ suivant\,:
 \begin{equation}
  U = U_2\, r^2 + \cdots + U_n\, r^n + U_{n+1}\, r^{n+1} + \ldots,
 \end{equation}
 o\`u les premiers points d\'esignent des termes en puissances de $r$ paires qui sont enti\`erement d\'etermin\'es par $\varphi$ et des termes de courbures de $g$ et de $h$. Le terme $U_n$ est ind\'etermin\'e.
\end{thrm}

Nous pouvons \`a pr\'esent d\'efinir les applications conforme--harmoniques en dimension paire.

\begin{definition}
 Nous appelons applications conforme--harmoniques de $(M^n,[g])$ dans $(N,h)$, les solutions de l'\'equation aux d\'eriv\'ees partielles covariante conforme du th\'eor\`eme \ref{theoreme_phi_pair}. On parlera alors d'applications C--harmoniques, afin d'all\'eger le texte.
\end{definition}

\begin{remarque}
 Nous aurions pu nous contenter de d\'eterminer la valeur sur le bord des $(n-1)$ premi\`eres d\'eriv\'ees par rapport \`a $r$ de notre solution $\tilde{\varphi}$ et de notre terme $H$ quand $n$ est pair. Il est facile de voir que c'est \'equivalent \`a la donn\'ee du d\'eveloppement asymptotique de $U$, mais il nous semble plus naturel de proc\'eder comme nous avons fait, en particulier pour faire le lien avec le th\'eor\`eme de Graham--Zworski sur les fonctions (voir ci--dessous).
\end{remarque}

Un exemple simple d'applications C--harmoniques est de regarder quand notre vari\'et\'e d'arriv\'ee $N$ est \'egale \`a $\mathbb{R}^m$, cela revient \`a travailler avec les fonctions $C^{\infty}$ de $(X,g_+)$. On retrouve quand $n$ est pair, la construction de Graham et Zworski (\cite{MR1965361}) des op\'erateurs GJMS de Graham, Jenne, Mason et Sparling (\cite{MR1190438}) en calculant directement le d\'eveloppement asymptotique de $\tilde{\varphi}$. C'est pourquoi dans le cas g\'en\'eral, comme on ne peut pas faire de d\'eveloppement asymptotique sur $\tilde{\varphi}$, on identifie notre vari\'et\'e $N$ d'arriv\'ee avec le fibr\'e $\varphi^*TN$, pour pouvoir calculer le d\'eveloppement asymptotique de $U$. Sur les fonctions, notre th\'eor\`eme \ref{theoreme_phi_pair} devient\,:

\begin{thrm}[Graham--Zworski]
 Soit $f$ une fonction $C^{\infty}$ de $M$, alors il existe une unique fonction $\tilde{f} \mod{O(r^n)}$ de $\overline{X}$ v\'erifiant le syst\`eme suivant\,:
 \begin{center}
  \label{systeme_fonction}
  $\left\{\begin{array}{ll}
   \tilde{f}_{|r=0} & = f \\
   \Delta_{g_+} \tilde{f} & = O(r^{n+1}\log{r}).
  \end{array}\right.$
 \end{center}
 De plus, le d\'eveloppement asymptotique de $f$ est pair jusqu'au terme $n-1$ et il contient un terme en $r^n\log{r}$ qui ne d\'epend que de $f$ et de $[g]$. Ce terme logarithmique d\'efinit un op\'erateur diff\'erentiel covariant conforme sur les fonctions de $(M,g)$ qui a pour terme principal $\Delta^{n/2}_g$. Il s'agit de l'op\'erateur GJMS de rang maximal.
\end{thrm}

Du th\'eor\`eme pr\'ec\'edent et d'une formule due \`a Graham dans \cite{MR2366901} qui a \'et\'e retrouv\'e par Gover (voir th\'eor\`eme $1.2$ dans \cite{MR2244375}), on obtient le corollaire suivant\,:
\begin{corollaire}
 \label{corollaire_fonction_einstein}
 Soient $(M^n,g)$ une vari\'et\'e d'Einstein de dimension paire et $f$ une fonction de $M$, alors $f$ est C--harmonique sur $(M,[g])$ si et seulement si\,:
\begin{equation*} 
 \Big( \prod_{j=1}^{n/2} \big( \Delta^g - \frac{(n+2j-2)\, (n-2j)}{4\, n\, (n-1)}\,\scal \big) \Big) f = 0.
\end{equation*}
\end{corollaire}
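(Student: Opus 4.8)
Le plan est de ramener entièrement le corollaire aux deux résultats rappelés juste avant son énoncé. D'après le théorème de Graham--Zworski, une fonction $f \in C^{\infty}(M)$ est C--harmonique sur $(M,[g])$ si et seulement si elle est annulée par l'opérateur GJMS de rang maximal $\Pa_n^g$ --- celui dont le terme principal est $\Delta_g^{n/2}$ et qui apparaît comme coefficient du terme en $r^n\log r$ dans le développement de $\tilde f$. Il suffit donc de montrer que, lorsque $g$ est d'Einstein, $\Pa_n^g$ coïncide, à un facteur constant non nul près (sans incidence sur le noyau), avec le produit figurant dans l'énoncé. Or ceci n'est rien d'autre que la formule de factorisation de Graham (\cite{MR2366901}), retrouvée par Gover (théorème $1.2$ de \cite{MR2244375}), qu'on appliquerait telle quelle.

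Plus en détail : sur une variété d'Einstein la courbure scalaire $\scal$ est constante, on a $\ric = \tfrac{\scal}{n}g$, d'où $\Sc = \tfrac{\scal}{2n(n-1)}g$ et $\nabla\Sc = 0$. Le théorème de Graham--Gover affirme que, sous cette seule hypothèse, l'opérateur $\Pa_n^g$ --- dont les coefficients font intervenir \emph{a priori} divers invariants de courbure, y compris le tenseur de Weyl dès que $n \ge 6$ --- se réduit à un produit de laplaciens décalés qui, avec la convention de signe adoptée ici pour le laplacien, prend la forme
\begin{equation*}
 \Pa_n^g \ = \ c_n \prod_{j=1}^{n/2} \Big( \Delta_g - \frac{(n+2j-2)(n-2j)}{4n(n-1)}\,\scal \Big), \qquad c_n \in \mathbb{R}^{*}.
\end{equation*}
Pour fixer et contrôler les coefficients, on se placerait sur le modèle à courbure sectionnelle constante : sur $\mathbb{S}^n$ on a $\scal = n(n-1)$, et la formule explicite des valeurs propres de $\Pa_{2k}$ sur les harmoniques sphériques donne $\Pa_{2k} \propto \prod_{j=1}^{k}\big(\Delta - (\tfrac{n}{2}+j-1)(\tfrac{n}{2}-j)\big)$ ; l'identité $(\tfrac{n}{2}+j-1)(\tfrac{n}{2}-j) = \tfrac{(n+2j-2)(n-2j)}{4n(n-1)}\,\scal$, valable sur la sphère, redonne exactement les coefficients annoncés. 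Le point non trivial emprunté à \cite{MR2366901,MR2244375} est l'\emph{universalité} de cette expression, c'est--à--dire le fait qu'elle ne dépend pas de la métrique d'Einstein choisie. On remarque au passage que le facteur $j = n/2$ a un coefficient nul : c'est simplement $\Delta_g$, en accord avec le fait que $\Pa_n^g$ annule les constantes. En combinant avec la caractérisation du premier paragraphe, on obtiendrait le corollaire, le facteur $c_n$ s'effaçant puisque seule compte l'équation $\Pa_n^g f = 0$.

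La difficulté principale --- et, de fait, la seule --- est la mise en cohérence soigneuse des conventions : signe du laplacien dans le théorème de Graham--Zworski, normalisation de la métrique d'Einstein employée par Gover, et conventions du présent article pour $\scal$ et pour le tenseur de Schouten, de manière à faire coïncider exactement les signes des coefficients ; cela n'affecte de toute façon pas l'énoncé final, qui ne porte que sur un noyau. On signalerait enfin que la factorisation n'est invoquée qu'au rang maximal $2k = n$, avec $n$ pair, ce qui dispense de toute discussion des cas $2k > n$ en courbure scalaire positive, où l'énoncé de factorisation réclame des précautions supplémentaires.
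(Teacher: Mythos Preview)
Your proposal is correct and follows exactly the approach indicated in the paper, which derives the corollary directly from the Graham--Zworski theorem (identifying C--harmonicity for functions with the kernel of the maximal GJMS operator) combined with the Graham/Gover factorisation formula for this operator on Einstein manifolds. Your additional discussion of the sphere case and of sign conventions is a helpful elaboration, but the core argument is the same as the paper's.
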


\subsection{D\'emonstration du th\'eor\`eme \ref{theoreme_phi_pair}}

\subsubsection{Quand $M$ est de dimension paire}

Soit $\tilde{\varphi}$ une application de $(\overline{X},g_+)$ dans $(N,h)$, on note $\varphi$ sa restriction sur $M$, on va montrer que si $\delta^{g_+} T\tilde{\varphi} = O(r^{n+1}\log{r})$, alors l'application $U := (\exp_{\varphi \circ p_M})^{-1} \circ \tilde{\varphi}$ admet le d\'eveloppement asymptotique annonc\'e.
 
Par changement conforme de m\'etrique, on obtient pour le laplacien de $\tilde{\varphi}$\,:
\begin{equation}
 \label{laplacien_g+}
 \delta^{g_+}T\tilde{\varphi}
  = r^2 \big(\delta^{g_r}T\tilde{\varphi} - \frac{\tr^{g_r}\!g'_r}{2}\,\partial_r\tilde{\varphi} - \nabla^h_{\!\textstyle{\partial_r\tilde{\varphi}}} \partial_r\tilde{\varphi} \big) + r(n-1)\,\partial_r\tilde{\varphi},
\end{equation}
et on obtient directement que $\delta^{g_+}T\tilde{\varphi}=O(r)$ par rapport \`a la m\'etrique $g$.
Pour simplifier les notations, on pose $\varphi^{(k)}$ comme \'etant \'egale \`a la valeur au bord de la $k$--i\`eme d\'eriv\'ee de $\tilde{\varphi}$ par rapport \`a $r$, c'est--\`a--dire
\begin{equation*}
 \varphi^{(k)} := \Big[(\nabla^h_{\!\textstyle{\partial_r\tilde{\varphi}}})^{k-1} \partial_r\tilde{\varphi}\Big]_{r=0}.
\end{equation*}
On a facilement les \'equivalences suivantes 
\begin{equation*}
 \delta^{g_+}T\tilde{\varphi}=O(r^2)
 \Longleftrightarrow [\nabla^h_{\!\textstyle{\partial_r\tilde{\varphi}}} \delta^{g_+}T\tilde{\varphi}]_{r=0}=0 
 \Longleftrightarrow \varphi^{(1)}=0.
\end{equation*}
Comme la d\'erivation $\nabla^h_{\!\textstyle{\partial_r\tilde{\varphi}}}$ sur $\tilde{\varphi}^*TN$ restreinte au bord ne d\'epend que de la m\'etrique $h$, de l'application $\varphi$ et de $\varphi^{(1)}$ qui est nul, on peut d\'eterminer $\varphi^{(k)}$ en fonction des conditions initiales, c'est--\`a--dire notre application $\varphi$ et des termes de courbures de $(M,g)$ et de $(N,h)$. On proc\`ede par r\'ecurrence sur $k$ tant que $k$ est strictement plus petit que $n$. Supposons que $\varphi^{(k-1)}$ soit d\'etermin\'e par les conditions initiales, on d\'etermine $\varphi^{(k)}$ en r\'esolvant l'\'equation $\delta^{g_+}T\tilde{\varphi}=O(r^{k+1})$ qui est \'equivalente \`a $\big[(\nabla^h_{\!\textstyle{\partial_r}\tilde{\varphi}})^k \delta^{g_+}T\tilde{\varphi} \big]_{r=0}=0$, c'est--\`a--dire\,:
\begin{equation}
 \label{equation_rec_derives_phi}
 (k-n)\,\varphi^{(k)} =(k-1) \Big[(\nabla^h_{\!\textstyle{\partial_r\tilde{\varphi}}})^{k-2}(\delta^{g_r}T\tilde{\varphi} - \frac{\tr^{g_r}\!g'_r}{2}\,\partial_r\tilde{\varphi}) \Big]_{r=0}.
\end{equation}
Comme on conna\^it les d\'eriv\'ees d'ordre inf\'erieur de $\tilde{\varphi}$ par hypoth\`ese de r\'ecurrence et le d\'eveloppement asymptotique de $g_r$ pour $r=0$ jusqu'au terme en $r^n\log{r}$, alors le terme de droite de (\ref{equation_rec_derives_phi}) est enti\`erement explicit\'e par les conditions initiales, tant que $k$ est strictement inf\'erieur \`a $n$. 

Par exemple si $k=2$ et $n\neq2$, on obtient\,:
\begin{equation}
 \label{phi''}
 \varphi^{(2)} = \frac{1}{2-n}\,\delta^g T\varphi,
\end{equation}
car la d\'eriv\'ee de $g_r$ par rapport \`a $r$ s'annule pour $r=0$ (voir th\'eor\`eme \ref{theoreme_g+}). Si $k$ est impair, on montre facilement par r\'ecurrence, en utilisant le fait que $g_r$ admet un d\'eveloppement asymptotique pair en $r=0$ jusqu'au terme $n-1$, que le terme de droite de (\ref{equation_rec_derives_phi}) est nul, ainsi pour tout entier impair $s$ compris entre $1$ et $n-1$, on a\,:
\begin{equation}
 \label{equation_rec_derives_impaires_phi}
 \varphi^{(s)} = 0.
\end{equation}
On verra que pour $n$ strictement plus grand que $2$, il appara\^it des termes de courbures de $(M,g)$ et de $(N,h)$ d\`es le terme $\varphi^{(4)}$ (on pourra consulter les exemples explicites de la cinqui\`eme partie).
 
Nous allons faire maintenant notre identification entre notre section $U$ et notre application $\tilde{\varphi}$. Pour cela, on prend $p$ un point de $M$, l'application exponentielle en $\varphi(p)$ d\'etermine un isomorphisme entre une petite boule $B_{\varphi(p)}$ de $N$ centr\'ee en $\varphi(p)$ et un ouvert de $T_{\varphi(p)}N$. On pose $\varepsilon_p := \sup(\{\alpha\ |\ \forall\beta<\alpha,\ \tilde{\varphi}(p,\beta)\in B_{\varphi(p)}\})$ et $U(p,r) := (\exp_{\varphi(p)})^{-1} \big(\tilde{\varphi}(p,r)\big)$, pour $r<\varepsilon_p$. On a facilement que $U(p,0) = (\exp_{\varphi(p)})^{-1} \big(\varphi(p)\big) = 0$. Comme la d\'eriv\'ee de $\tilde{\varphi}$ par rapport \`a $r$ est nulle sur le bord, il en est de m\^eme pour la d\'eriv\'ee de $U$ par rapport \`a $r$. On montre ainsi par r\'ecurrence, que les d\'eriv\'ees impaires d'ordre inf\'erieur \`a $n$ de $U$ s'annulent sur le bord. Ainsi les termes impairs du d\'eveloppement asymptotique de $U$ sont nuls jusqu'\`a l'ordre $n$ et les termes pairs sont donn\'es jusqu'\`a l'ordre $n-2$ par les d\'eriv\'ees de $\tilde{\varphi}$ par rapport \`a $r$ en $r=0$, qui sont eux--m\^emes enti\`erement d\'etermin\'es par les conditions initiales et des d\'eriv\'es de $(\exp_{\varphi(p)})^{-1}$ en $0$ qui sont elles--m\^emes des expressions universelles de $\R^h$ et de ses d\'eriv\'ees. En r\'esum\'e, le d\'eveloppement asymptotique de $U$ en $r=0$ est d\'ej\`a de la forme suivante\,:
\begin{equation*}
 U = U_2\, r^2 + \cdots + U_{n-2}\, r^{n-2} + \cdots.
\end{equation*}
Supposons que le terme suivant du d\'eveloppement asymptotique soit le terme $U_n\, r^n$, alors $\varphi^{(n)}$ existe et (\ref{equation_rec_derives_phi}) implique que
\begin{equation}
 \Big[(\nabla^h_{\!\textstyle{\partial_r\tilde{\varphi}}})^{n-2}(\delta^{g_r}T\tilde{\varphi} - \frac{\tr^{g_r}\!g'_r}{2}\,\partial_r\tilde{\varphi}) \Big]_{r=0} = 0.
\end{equation}
Cette \'equation n'a aucune chance d'\^etre vraie en g\'en\'eral, c'est pourquoi on introduit notre terme en $r^n\log{r}$. Le d\'eveloppement asymptotique en $r=0$ de la d\'eriv\'ee de $\tilde{\varphi}$ est ainsi la forme\,:
\begin{equation*}
 \partial_r\tilde{\varphi} = \tilde{\varphi}^{(2)}\, r + \frac{1}{3!}\, \tilde{\varphi}^{(4)}\, r^3 + \ldots + n\, \tilde{H}^g(\varphi)\, r^{n-1}\log{r} + \tilde{Q}\, r^{n-1} + \ldots,
\end{equation*}
o\`u $\tilde{\varphi}^{(2)}, \tilde{\varphi}^{(4)}, \ldots, \tilde{H}^g(\varphi)$ et $\tilde{Q}$ d\'esignent le transport parall\`ele le long de $r \rightarrow \tilde{\varphi}(r,\cdot)$ de $\varphi^{(2)}, \varphi^{(4)}, \ldots, H^g(\varphi)$ et $Q$. On obtient dans ce cas l\`a
\begin{equation*}
  r^2 \big(\nabla^h_{\!\textstyle{\partial_r\tilde{\varphi}}} \partial_r\tilde{\varphi} \big) - r\,(n-1)\,\partial_r\tilde{\varphi} = n\, H^g(\varphi)\, r^n +O(r^{n+1}\log{r}),
\end{equation*}
ce qui montre qu'avec (\ref{laplacien_g+}), l'\'equation $\delta^{g_+}T\tilde{\varphi}=O(r^{n+1}\log{r})$ est \'equivalente \`a l'\'egalit\'e suivante\,:
\begin{equation}
 \label{equation_H^g}
 H^g(\varphi) = \frac{n-1}{n!}\, \Big[(\nabla^h_{\!\textstyle{\partial_r\tilde{\varphi}}})^{n-2}(\delta^{g_r} T\tilde{\varphi} - \frac{\tr^{g_r}\!g'_r}{2}\,\partial_r\tilde{\varphi})\Big]_{r=0},
\end{equation}
ce qui d\'etermine $H^g(\varphi)$ par les conditions initiales. Notre \'equation de r\'ecurrence ne nous permet pas d'expliciter le terme $U_n$, il est formellement ind\'etermin\'e et l'unicit\'e de notre solution est donc bien v\'erifi\'ee modulo $O(r^n)$. L'existence se montre en remarquant que l'application $(p,r) \rightarrow \exp_{\varphi(p)} (U_2\, r^2 + \cdots + U_{n-2}\, r^n + H^g\, r^n\,\log{r})$ v\'erifie par construction le syst\`eme du th\'eor\`eme \ref{theoreme_phi_pair}.


Par un raisonnement classique sur les d\'eveloppements asymptotiques de ce type, on montre que $H^g$ est un terme covariant conforme (voir \cite{MR1758076}). Pour le calcul de la partie principale de $H^g$ voir la preuve de th\'eor\`eme \ref{theoreme_fonctionnelle_paire}.

\subsubsection{Quand $M$ est de dimension impaire}
 
Supposons maintenant que $n$ est impair, on a encore\,:
\begin{equation}
 \delta^{g_+}T\tilde{\varphi}
  = r^2 \big(\delta^{g_r}T\tilde{\varphi} - \frac{\tr^{g_r}\!g'_r}{2}\,\partial_r\tilde{\varphi} - \nabla^h_{\!\textstyle{\partial_r\tilde{\varphi}}} \partial_r\tilde{\varphi} \big) + r(n-1)\,\partial_r\tilde{\varphi}.
\end{equation}
On montre comme avant, que les d\'eriv\'ees impaires de $\tilde{\varphi}$ par rapport \`a $r$ d'ordre inf\'erieur \`a $n-1$ s'annulent sur le bord. Par contre, pour des raisons de parit\'e, le terme de droite de l'\'egalit\'e ci--dessus ne contient pas de terme en $r^n$ et il n'y a donc pas de terme en $r^n\log{r}$ dans le d\'eveloppement asymptotique de $U$ contrairement au cas pr\'ec\'edent. Le terme en $r^n$ est ind\'etermin\'e comme pr\'ec\'edemment.

On vient de montrer que si $\tilde{\varphi}$ est une application $C^{n-1}$ de $\overline{X}$ dans $N$ et qui est asymptotiquement harmonique de $(X,g_+)$ dans $(N,h)$, alors ce qu'on pourrait appeler son d\'eveloppement asymptotique (en fait celui de $U$) est d\'etermin\'e jusqu'au terme $r^{n-1}$ par les m\'etriques $g$ et $h$, et la valeur de $\tilde{\varphi}$ sur le bord.

\subsection{Exemples}

\begin{proposition}
\label{proposition_einstein_C-harmonique}
 Soient $(M^n,g)$ une vari\'et\'e d'Einstein de dimension paire et $(N,h)$ une vari\'et\'e riemannienne, alors les applications harmoniques de $(M,g)$ dans $(N,h)$ sont C--harmoniques.
\end{proposition}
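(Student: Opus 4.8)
The plan is to use the explicit form of the Poincaré metric of an Einstein manifold, namely $g_r = (1-\lambda r^2)^2 g$ from \eqref{g_+_einstein}, together with the recursive characterization of the asymptotic expansion of $U$ established in the proof of Theorem \ref{theoreme_phi_pair}. Suppose $\varphi$ is harmonic, i.e. $\delta^g T\varphi = 0$. I want to show that the obstruction term $H^g(\varphi)$, given by \eqref{equation_H^g}, vanishes. The key observation is that when $g$ is Einstein the whole boundary problem simplifies: the lift $\tilde\varphi$ whose expansion we compute should simply be $\varphi \circ p_M$ itself (equivalently $U \equiv 0$), provided we can check that this constant-in-$r$ map is asymptotically harmonic to the required order.

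First I would compute $\delta^{g_+} T\tilde\varphi$ for $\tilde\varphi = \varphi \circ p_M$ using the conformal change formula \eqref{laplacien_g+}. Since $\tilde\varphi$ does not depend on $r$, we have $\partial_r \tilde\varphi = 0$, so the formula collapses to $\delta^{g_+} T\tilde\varphi = r^2\, \delta^{g_r} T\tilde\varphi$. Now $g_r = (1-\lambda r^2)^2 g$ is conformal to $g$, so by the conformal behaviour of $\delta$ acting on $T\varphi$ (recalled in the introduction, $\delta^{\overline g} T\varphi = e^{-2\omega}(\delta^g T\varphi - (n-2)\langle d\omega, T\varphi\rangle_g)$ with $e^{2\omega} = (1-\lambda r^2)^2$), one gets $\delta^{g_r} T\tilde\varphi = (1-\lambda r^2)^{-2}\big(\delta^g T\varphi - (n-2)\langle d\log(1-\lambda r^2), T\varphi\rangle_g\big)$. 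The second term involves $d_M(1-\lambda r^2) = 0$ since $r$ is constant along $M$ — the gradient here is the spatial gradient on the slice — so in fact $\langle d\omega, T\varphi\rangle_g = 0$ because $\omega$ depends only on $r$. Hence $\delta^{g_r} T\tilde\varphi = (1-\lambda r^2)^{-2}\,\delta^g T\varphi = 0$ when $\varphi$ is harmonic, and therefore $\delta^{g_+} T\tilde\varphi = 0$ exactly, not merely modulo $O(r^{n+1}\log r)$.

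It then follows that $\tilde\varphi = \varphi \circ p_M$ is an exact solution of the boundary problem of Theorem \ref{theoreme_phi_pair}, so by the uniqueness statement there (uniqueness of $U$ modulo $O(r^n)$) the associated section is $U \equiv 0$ modulo $O(r^n)$; in particular all the coefficients $U_2, \ldots, U_{n-2}$ and the logarithmic coefficient $H^g(\varphi)$ vanish. Since $H^g(\varphi) = 0$ is precisely the C--harmonicity equation, $\varphi$ is C--harmonic.

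The main point to be careful about is the claim $\langle d\omega, T\varphi\rangle_g = 0$: one must check that in the pairing appearing in the conformal change formula on the slice $M \times \{r\}$, the relevant differential is the differential of $\omega = \log(1-\lambda r^2)$ \emph{along $M$}, which indeed vanishes identically because $\omega$ is a function of $r$ alone and the metrics $g_r$ live on $M$. An alternative and perhaps cleaner route, avoiding any subtlety, is to verify directly from the recursion \eqref{equation_rec_derives_phi} that $\varphi^{(k)} = 0$ for all $k < n$: the base case $\varphi^{(2)} = \tfrac{1}{2-n}\delta^g T\varphi = 0$ holds by harmonicity (using \eqref{phi''}), the odd derivatives vanish by \eqref{equation_rec_derives_impaires_phi}, and for even $k$ one feeds $\varphi^{(2)} = \cdots = \varphi^{(k-2)} = 0$ together with the explicit $g_r$ into the right-hand side of \eqref{equation_rec_derives_phi}; because $g'_r = -4\lambda r(1-\lambda r^2) g$ is a multiple of $g$ and $\delta^{g_r} T\tilde\varphi$ is a multiple of $\delta^g T\varphi = 0$, every term is seen to vanish. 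Plugging the same information into \eqref{equation_H^g} gives $H^g(\varphi) = 0$ directly. Either way the computation is short; the only genuine work is bookkeeping the conformal factors, which is routine given \eqref{g_+_einstein}.
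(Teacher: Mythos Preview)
Your proposal is correct and essentially coincides with the paper's proof. The paper argues exactly along your ``alternative and perhaps cleaner route'': using $g_r=(1-\lambda r^2)^2 g$, it computes $\varphi^{(2)}=0$ from \eqref{equation_rec_derives_phi} and harmonicity, and then proves by induction that all $\varphi^{(k)}$ vanish at $r=0$, whence $H^g(\varphi)=0$ by \eqref{equation_H^g}. Your first route---observing that $\tilde\varphi=\varphi\circ p_M$ is an \emph{exact} $g_+$-harmonic extension and invoking the uniqueness of the expansion (equivalently Corollary~\ref{corollaire_rigidite_harmonique_bord})---is a mild repackaging of the same computation: it trades the explicit induction for the uniqueness statement already proved, and your check that $\langle d\omega,T\varphi\rangle_g=0$ because $\omega$ depends only on $r$ is precisely what makes the recursion collapse in the paper's argument.
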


\begin{proof}
 Supposons que $g$ v\'erifie $\ric=4\lambda(n-1)\,g$, alors d'apr\`es la formule (\ref{g_+_einstein}), la m\'etrique de Poincar\'e de $g$ s'\'ecrit $g_+ = r^{-2} \big(dr^2 + (1-\lambda\,r^2)^2g\big)$. Soit $\varphi$ une application harmonique de $(M,g)$ dans $(N,h)$, on obtient alors avec l'\'egalit\'e (\ref{equation_rec_derives_phi}) pour $k=1$\,:
 \begin{equation*}
  \varphi^{(2)} = \frac{-1}{n-2}\, \Big[ \frac{\delta T\tilde{\varphi}}{(1-\lambda r^2)^2} + \frac{2\,\lambda n r}{1-\lambda r^2} \Big]_{r=0} = 0.
 \end{equation*}
 Par r\'ecurrence, on montre ainsi que les d\'eriv\'ees paires de $\tilde{\varphi}$ sont nulles en $r=0$ et donc que $H^g(\varphi)=0$.
\end{proof}

\begin{remarque}
 Comme l'application identit\'e d'une vari\'et\'e riemannienne est harmonique, on vient donc de montrer que si la vari\'et\'e est Einstein, alors elle est C--harmonique. On montrera qu'il existe des hypoth\`eses plus faibles qu'\^etre Einstein pour que l'identit\'e soit harmonique (voir le corollaire \ref{corollaire_id_dim4} pour la dimension $4$ et le th\'eor\`eme \ref{theoreme-fonctionnelle-dimension6-id} pour la dimension $6$).
\end{remarque}

\subsection{Obstruction au remplissage harmonique}

Pour les vari\'et\'es AH, on obtient comme corollaire du th\'eor\`eme \ref{theoreme_phi_pair}\,:
\begin{corollaire}
 \label{corollaire_rigidite_harmonique_bord}
 Soient $(X^{n+1},g_+)$ une vari\'et\'e AH de dimension impaire, d'infini conforme $(M,[g])$ et $(N,h)$ une vari\'et\'e riemannienne, alors les applications qui sont de classe $C^n$ de $\overline{X}$ dans $N$ et harmonique de $(X,g_+)$ dans $(N,h)$, v\'erifient le fait que leurs restrictions \`a $M$ est C--harmonique de $(M,[g])$ dans $(N,h)$.
\end{corollaire}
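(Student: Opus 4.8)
L'idée est de ramener la situation au théorème \ref{theoreme_phi_pair} : comme $X$ est de dimension $n+1$ avec $n$ impair, $n+1$ est pair — mais attention, ce n'est pas la dimension pertinente. En fait $M$ est de dimension $n$ impaire, et la fonctionnelle conforme-harmonique est définie en dimension \emph{paire}. Donc je relis l'énoncé : il faut que la \emph{restriction à $M$} soit C-harmonique, ce qui n'a de sens que si $n$ est pair. Or l'énoncé suppose $X^{n+1}$ de dimension \emph{impaire}, donc $n$ pair. Bien. Premièrement, je fixe un représentant $g$ de l'infini conforme $[g]$ de $(X,g_+)$. Par le théorème de Graham cité après la formule (\ref{forme normale}), on peut écrire $g_+$ sous forme normale $g_+ = r^{-2}(dr^2+g_r)$ dans un voisinage de $M$, avec $g_0 = g$. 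Attention cependant : ce $g_+$ n'est pas forcément la métrique de Poincaré-Einstein de $(M,[g])$ — c'est seulement une métrique AH. Mais le théorème \ref{theoreme_phi_pair} n'utilise en réalité que le développement asymptotique \emph{pair} de $g_r$ jusqu'à l'ordre $n-2$ pour déterminer les $\varphi^{(k)}$, et la forme normale garantit que $g_r' = 0$ en $r=0$; plus généralement, pour une métrique AH quelconque le développement de $g_r$ n'est pas nécessairement pair, donc il faut être prudent.

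Deuxièmement, voici le cœur de l'argument. Soit $\Phi : \overline{X} \to N$ une application de classe $C^n$, harmonique de $(X,g_+)$ dans $(N,h)$, c'est-à-dire $\delta^{g_+} T\Phi = 0$ sur $X$. Posons $\varphi := \Phi_{|M}$. Je veux montrer que $H^g(\varphi) = 0$, où $H^g$ est le terme logarithmique construit dans le théorème \ref{theoreme_phi_pair}. L'idée est que $\Phi$ fournit une solution \emph{exacte} du problème de Cauchy $\delta^{g_+}T\tilde\varphi = 0$, qui est donc en particulier une solution $O(r^{n+1}\log r)$. Par la partie \emph{unicité} du théorème \ref{theoreme_phi_pair}, le développement asymptotique de $U := (\exp_{\varphi\circ p_M})^{-1}\circ\Phi$ coïncide, modulo $O(r^n)$, avec celui construit dans le théorème; mais surtout, en reparcourant la récurrence (\ref{equation_rec_derives_phi}) avec $\delta^{g_+}T\Phi = 0$ exactement (et non seulement $O(r^{n+1}\log r)$), on obtient que l'équation (\ref{equation_H^g}) force $H^g(\varphi)$ à être égal au membre de droite, lequel \emph{doit s'annuler} puisque $\Phi$ étant de classe $C^n$ et solution exacte, son développement n'a pas de terme en $r^n\log r$. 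Plus précisément : $\Phi \in C^n$ implique que $\varphi^{(n)} := [(\nabla^h_{\partial_r\Phi})^{n-1}\partial_r\Phi]_{r=0}$ existe (dérivée d'ordre $n$ bien définie), donc le terme logarithmique $H^g(\varphi)\,r^n\log r$ ne peut pas apparaître, ce qui via (\ref{equation_H^g}) donne exactement $H^g(\varphi) = 0$. Donc $\varphi$ est C-harmonique de $(M,[g])$ dans $(N,h)$.

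Troisièmement, il reste un point technique : le théorème \ref{theoreme_phi_pair} est énoncé pour la métrique de Poincaré de $(M,[g])$, alors qu'ici $(X,g_+)$ est seulement AH. Il faut vérifier que le terme $H^g$ ne dépend effectivement que de $[g]$ (et de $h$), et pas du choix particulier de la métrique AH de remplissage — mais c'est précisément la covariance conforme affirmée dans le théorème \ref{theoreme_phi_pair}, combinée avec le fait que les $\varphi^{(k)}$ pour $k \le n-1$ sont déterminés universellement par les courbures de $g$ et $h$ via la récurrence, récurrence dont les coefficients $(k-n)$ ne s'annulent jamais pour $k<n$. Ainsi le calcul formel de $\varphi^{(2)},\ldots,\varphi^{(n-1)}$ puis de l'obstruction $H^g$ est le même que celui fait avec la métrique de Poincaré, pourvu que $g_r$ ait bien un développement pair jusqu'à l'ordre $n-1$ : or pour une métrique AH générale ce n'est pas automatique, donc soit on invoque que toute métrique AH admet une perturbation à ordre fini vers une métrique dont le développement est pair (ce qui ne change pas $\varphi_{|M}$ ni les $\varphi^{(k)}$ modulo les termes voulus), soit — plus simplement — on observe que pour établir $H^g(\varphi)=0$ on n'a besoin que de la \emph{solution exacte} fournie par $\Phi$ et de l'équation (\ref{laplacien_g+}), qui est valable pour n'importe quelle forme normale. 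L'obstacle principal sera donc de bien articuler que $\Phi \in C^n$ interdit l'apparition du terme $r^n\log r$ dans $U$, et d'en déduire rigoureusement via (\ref{equation_H^g}) que l'obstruction conforme-harmonique $H^g(\varphi)$, qui est bien définie à partir de $[g]$ et $h$ seuls, est nulle.
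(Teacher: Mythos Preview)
Your core argument---a $C^n$ harmonic extension $\Phi$ cannot carry a $r^n\log r$ term in the expansion of $U$, hence the log-coefficient vanishes---is exactly the paper's proof, which dispatches the corollary in three lines by invoking Th\'eor\`eme~\ref{theoreme_phi_pair} and the $C^n$ regularity of $\Phi$.

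You are, however, more scrupulous than the paper on one point, and you have not fully resolved it. Th\'eor\`eme~\ref{theoreme_phi_pair} and the very definition of $H^g$ are formulated with the \emph{Poincar\'e} metric of $(M,[g])$, whereas the corollary assumes only that $(X,g_+)$ is AH. The recursion (\ref{equation_rec_derives_phi}) run with an arbitrary AH metric in normal form certainly produces \emph{some} log-coefficient, and $C^n$ regularity kills it; but that coefficient depends a priori on the $(n\!-\!2)$-jet of $g_r$, not just on $g$, so it is not obviously the conformally invariant obstruction $H^g$. Your two proposed fixes do not close the gap: invoking the conformal covariance of $H^g$ is circular (that covariance is proved for the Poincar\'e construction), and the ``universal determination by curvatures of $g$'' in (\ref{equation_rec_derives_phi})--(\ref{equation_H^g}) rests on the parity of the $g_r$-expansion, which is a consequence of the asymptotic Einstein condition and need not hold for a general AH metric. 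The paper's own proof simply does not address this; the intended reading is presumably that $g_+$ is AHE (or at least Poincar\'e--Einstein in the sense defined after Th\'eor\`eme~\ref{theoreme_g+}), so that the jet of $g_r$ agrees with that of the Poincar\'e metric through order $n-2$ and the two recursions coincide. With that reading your argument, and the paper's, go through.
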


\begin{proof}
 Soit $\tilde{\varphi}$ une application $C^n$ de $\overline{X}$ dans $N$ et harmonique de $(X,g_+)$ dans $(N,h)$, alors l'application $U := (\exp_{\tilde{\varphi}\,\circ\,p_M})^{-1} \circ \tilde{\varphi}$ admet, d'apr\`es le th\'eor\`eme \ref{theoreme_phi_pair}, le d\'eveloppement asymptotique en $r=0$ suivant\,:
 \begin{equation}
  U = U_2\, r^2+ \cdots + U_{n-2}\, r^{n-2} + H^g \, r^n \log{r} + O(r^n),
 \end{equation}
 or $\tilde{\varphi}$ est $C^n$ sur $\overline{X}$, donc $H^g=0$ et $\tilde{\varphi}_{|M}$ est bien C--harmonique.
\end{proof}

\section{L'\'energie renormalis\'ee}

\subsection{Quand $M$ est de dimension paire}

Soient $\varphi$ une application de $(M,g)$ dans $(N,h)$ et $\tilde{\varphi}$ la solution donn\'ee par le th\'eor\`eme \ref{theoreme_phi_pair} (les termes ind\'etermin\'es de $\tilde{\varphi}$ n'auront aucune incidence dans la suite), on note
\begin{equation*}
 E_{g_+}(\tilde{\varphi},\rho) := \frac{1}{2} \int_{M \times [\rho;\varepsilon]} |T\tilde{\varphi}|^2_{g_+,h} dvol_{g_+}
\end{equation*}
l'\'energie de $\tilde{\varphi}$ dans le ruban $M \times [\rho;\varepsilon]$ par rapport \`a $g_+$ et $h$, qui d\'epend donc de l'identification au bord via la m\'etrique $g$. En renormalisant cette \'energie, on obtient le th\'eor\`eme suivant\,:

\begin{thrm}
\label{theoreme_fonctionnelle_paire}
 Le d\'eveloppement asymptotique de $E(\tilde{\varphi},\rho)$ en $\rho=0$ est de la forme suivante\,:
 \begin{equation*}
  E_{g_+}(\tilde{\varphi},\rho) = E_{2-n}\,\rho^{2-n} + \cdots + E_{-2}\,\rho^{-2} + F\,\log{\frac{1}{\rho}} + O(1),
 \end{equation*}
 o\`u les points d\'esignent des termes en puissances paires de $\rho$ de $2-n$ \`a $-2$ qui sont enti\`erement d\'etermin\'es par $\varphi$ et des termes de courbures de $g$ et de $h$.

 Le terme $F$ ne d\'epend que de $\varphi$ et de $[g]$ et de $h$, on peut ainsi d\'efinir une fonctionnelle invariante conforme $\mathcal{E}_g (\varphi) := -(n\,a_n)^{-1} \,F(\varphi,g)$. Plus pr\'ecis\'ement, elle v\'erifie $\mathcal{E}_{\overline{g}}(\varphi) = \mathcal{E}_g(\varphi)$ pour tout $\overline{g}$ dans $[g]$ et elle s'\'ecrit
 \begin{equation}
  \mathcal{E}_g (\varphi) = \frac{1}{2} \int_M \big\langle (\delta^g d)^{n/2-2} \delta^g T\varphi , \delta^g T\varphi \big\rangle_g \,dvol_g + \ldots,
 \end{equation}
 o\`u les points de suspension d\'esignent des int\'egrales sur $M$ de termes en d\'eriv\'ees de $\varphi$ d'ordre inf\'erieur.
 
 De plus, le gradient de notre fonctionnelle $\mathcal{E}_g$ associ\'e \`a $g$ est \'egale \`a $\frac{1}{a_n}\, H^g$, c'est--\`a--dire que quelque soit $\dot{\varphi}\in\Gamma(\varphi^*TN)$, on a
 \begin{equation*}
  d_{\varphi} \mathcal{E}_g(\dot{\varphi}) = \frac{1}{a_n} \int_M \langle\dot{\varphi},H^g\rangle_h \,dvol_g.
 \end{equation*} 
\end{thrm}
 
Le d\'eveloppement asymptotique de l'\'energie poss\`ede la m\^eme structure que celui du volume d'une vari\'et\'e AH calcul\'e par Graham dans \cite{MR1758076}, ce qui donne ainsi des r\'esultat de m\^eme nature. Notre terme logarithmique $F$ (qui est notre fonctionnelle $\mathcal{E}$) et le terme logarithmique $L$ dans l'\'etude du volume sont ainsi des invariants conformes. De plus, Graham et Hirachi montrent dans \cite{MR2160867}, que la variation infinit\'esimale de $L$ ne d\'epend que du terme logarithmique $h$ du d\'eveloppement de la m\'etrique $g_r$, jouant ainsi le m\^eme r\^ole que $H$ par rapport \`a notre fonctionnelle.

\begin{remarque}
 Graham et Hirachi ont \'enonc\'e ce th\'eor\`eme en terme de $Q$--courbure et de tenseur d'obstruction. En effet, \`a un facteur multiplicatif, l'int\'egrale de la $Q$--courbure est \'egale \`a $L$ (voir Graham et Zworski dans \cite{MR1965361}) et le tenseur d'obstruction est \'egale \`a $h$. On pourra \'egalement consulter les travaux de Pierre Albin (\cite{MR2509323}) sur le sujet.
\end{remarque}

\begin{proof}
 On commence par montrer que l'\'energie admet un d\'eveloppement asymptotique de ce type, remarquons d\'ej\`a que\,:
 \begin{equation*}
  E_{g_+}(\tilde{\varphi},\rho) = \frac{1}{2} \,\int_{M \times [\rho;\varepsilon]} \frac{|T\tilde{\varphi}|^2_{dr^2+g_r}}{r^{n-1}} dr\,dvol_{g_r}.
 \end{equation*}
D'apr\`es les th\'eor\`emes \ref{theoreme_g+} et \ref{theoreme_phi_pair}, les d\'eriv\'ees impaires d'ordre inf\'erieur \`a $n$ de $g_r$ et de $\tilde{\varphi}$ s'annule pour $r=0$, ainsi on a le d\'eveloppement asymptotique suivant\,:
 \begin{equation*}
  |T\tilde{\varphi}|^2_{dr^2+g_r} dvol_{g_r} = \big( e_0 + e_2\,r^2 + \cdots + e_{n-2}\,r^{n-2} + O(r^n\log{r}) \big) \ dvol_g,
 \end{equation*}
 ce qui donne\ le r\'esultat annonc\'e et la formule suivante\,:
 \begin{equation}
  \label{fonctionnelle_dimension_n}
  \mathcal{E}_g(\varphi) = \frac{n-1}{2\, n!\, a_n} \int_M \partial_r^{n-2} \big[ |T\tilde{\varphi}|^2_{dr^2+g_r} \,dvol_{g_r} \big]_{r=0}.
 \end{equation}
 En effet, on a pour le terme de gauche\,:
 \begin{equation*}
  \mathcal{E}_g (\varphi) = - \frac{1}{n\,a_n} \,F = \frac{1}{2n\,a_n}\, \int_M e_{n-2}\, dvol_g,
 \end{equation*}
 et pour le terme sous l'int\'egrale \`a droite\,:
 \begin{equation*}
  \partial_r^{n-2} \big[ |T\tilde{\varphi}|^2_{dr^2+g_r} \,dvol_{g_r} \big]_{r=0} = (n-2)!\, e_{n-2}\, dvol_g.
 \end{equation*}
 La fonctionnelle $\mathcal{E}_g : \varphi \rightarrow F$ est bien d\'efinie, car $F$ d\'epend seulement des $(n-2)$ premiers termes des d\'eveloppements asymptotiques de $g_r$ et de $\tilde{\varphi}$, qui sont d\'etermin\'es par les conditions initiales. L'invariance conforme de notre fonctionnelle est un r\'esultat classique de l'\'etude de ce type de d\'eveloppement asymptotiques (voir \cite{MR1758076}) et d'apr\`es la formule (\ref{equation_rec_derives_phi}) on a $\varphi^{(k)} = -\frac{k-1}{n-k}\, \delta^g d \varphi^{(k-2)} + \cdots$, o\`u les $\cdots$ repr\'esentent des termes en d\'eriv\'ees de $\varphi$ d'ordre inf\'erieurs. D'apr\`es la formule (\ref{fonctionnelle_dimension_n}), on a facilement par r\'ecurrence\,:
 \begin{align*}
  \mathcal{E}_g(\varphi)
   &= \frac{n-1}{n!\, a_n} \int_M \big\langle d\varphi^{(n-2)} , T\varphi \big\rangle_g \,dvol_g + \ldots \\
   &= -\frac{(n-1)(n-3)}{2\, n!\, a_n} \int_M \big\langle d\delta^g d\varphi^{(n-4)} , T\varphi \big\rangle_g \,dvol_g + \ldots \\
   &= \int_M \big\langle (\delta^g d)^{n/2-2} \delta^g T\varphi , \delta^g T\varphi \big\rangle_g \,dvol_g + \ldots .
 \end{align*}
 
 On va montrer que le gradient de notre fonctionnelle $\mathcal{E}_g$ est un terme de bord dans une int\'egration par parties. Soit $(\varphi_t)_{t\in[0,1]}$ une famille \`a $1$--param\`etre d'applications $C^{\infty}$ de $M$ dans $N$ v\'erifiant le syst\`eme suivant\,:
 \begin{center}
  $\left\{
   \begin{array}{ll}
   \varphi_0 & = \varphi \\
   \big[\partial_t\varphi_t\big]_{t=0} & = \dot{\varphi},
   \end{array}
  \right.$
 \end{center}
 alors d'apr\`es le th\'eor\`eme \ref{theoreme_phi_pair}, pour tout $t$ dans $[0,1]$, il existe une application $\tilde{\varphi_t}$ de $M\times [0,\varepsilon]$ dans $N$ qui v\'erifie\,:
 \begin{center}
  $\left\{
   \begin{array}{ll}
   \tilde{\varphi}_{t|r=0} & = \varphi_t \\
   \delta^{g_+}T{\tilde{\varphi_t}} & = O(r^{n+1}\log{r}).
   \end{array}
  \right.$
 \end{center}
 On munit $M\times [0,\varepsilon]\times[0,1]$ de la m\'etrique $\gamma=g_+ + dt^2$ et on pose $\varPhi(p,r,t):=\tilde{\varphi_t}(p,r)$ qui est une application de $M\times [0,\varepsilon]\times[0,1]$ dans $N$. Son application tangente $T\varPhi$ est donc une section du fibr\'e $\Omega(M\times [0,\varepsilon])\otimes\varPhi^*TN$, sur lequel on d\'efinit la connexion $\nabla^{\gamma,h}$. Comme $|T\tilde{\varphi_t}|^2_{g_+,h} = |T\varPhi|^2_{\gamma,h} - |\partial_t\varPhi|^2_h$ et $\nabla^{\gamma,h}T\varPhi$ est sym\'etrique, on obtient
 \begin{align*}
  \partial_t E_{g_+}(\tilde{\varphi}_t,\rho)
   &= \frac{1}{2} \,\partial_t \Big( \int_{M\times[\rho,\varepsilon]} |T\tilde{\varphi_t}|^2_{g_+,h} dvol_{g_+} \Big) \\
   &=   \int_{M\times[\rho,\varepsilon]} \Big( \big\langle \nabla^{\gamma,h}_{\!\textstyle{\partial_t}} T\varPhi , T\varPhi\big\rangle_{\gamma,h}
       - \big\langle\nabla^h_{\textstyle{\partial_t\varPhi}}(\partial_t\varPhi) , \partial_t\varPhi\big\rangle_h \Big) \, dvol_{g_+} \\
   &=   \int_{M\times[\rho,\varepsilon]} \Big( \big\langle \nabla^h_{\textstyle{T\varPhi}} (\partial_t\varPhi) , T\varPhi \big\rangle_{\gamma,h}
       - \big\langle\nabla^h_{\textstyle{T\varPhi}}(\partial_t\varPhi) , T\varPhi \big\rangle_{dt^2,h} \Big) \, dvol_{g_+} \\
   &=   \int_{M\times[\rho,\varepsilon]} \big\langle\nabla^h_{\textstyle{T\varPhi}} (\partial_t\varPhi) , T\varPhi\big\rangle_{g_+,h} dvol_{g_+},
 \end{align*}
 ce qui donne pour $t=0$\,:
 \begin{equation*}
  \Big[\partial_t E_{g_+}(\tilde{\varphi}_t,\rho)\Big]_{t=0} = \int_{M\times[\rho,\varepsilon]} \big\langle\nabla^h_{\textstyle{T\tilde\varphi}} [\partial_t\tilde{\varphi}_t]_{t=0} , T\tilde{\varphi}\big\rangle_{g_+,h} dvol_{g_+}.
 \end{equation*}
 Apr\`es une int\'egration par parties, on obtient que la diff\'erentielle de $\mathcal{E}$ est \'egale au terme en $\log{\rho}$ de l'expression suivante multipli\'e par $(n\,a_n)^{-1}$\,:
 \begin{equation*}
  \int_{M\times[\rho,\varepsilon]} \big\langle[\partial_t\tilde{\varphi_t}]_{t=0},\delta^{g_+}T\tilde{\varphi}\big\rangle_h dvol_{g_+}
   - \int_M \rho^{-n+1}\,\big\langle[\partial_t\tilde{\varphi_t}]_{t=0} , \partial_{\rho}\tilde{\varphi}\big\rangle_h dvol_{g_\rho}.
 \end{equation*}
 Comme $\delta^{g_+}T\tilde{\varphi}=O(r^{n+1}\log{r})$, $[\partial_t\tilde{\varphi_t}]_{t=0}=O(1)$ et $dvol_{g_+}=O(r^{-n-1})$, il n'y a pas de $\log{\rho}$ dans la premi\`ere int\'egrale, le terme recherch\'e est donc celui en $\rho^{n-1}\,\log{\rho}$ de $\partial_{\rho}\tilde{\varphi}$, qui est exactement $n\,H^g$. Ainsi, on a bien
 \begin{equation*}
  d_{\varphi} \mathcal{E}_g(\dot{\varphi}) = \frac{1}{a_n} \int_M \langle\dot{\varphi},H^g\rangle_h \,dvol_g.
 \end{equation*} 
\end{proof}

\begin{remarque}
 On peut trouver dans la litt\'erature (voir \cite{MR1692148}, \cite{MR2301373} et les r\'ef\'erences cit\'ees) une autre g\'en\'eralisation des applications harmoniques qui est non--conforme. Ce sont les applications biharmoniques, qui sont d\'efinies comme \'etant les points critiques de la bi\'energie\,:
 \begin{equation*}
  E^2_g(\varphi) := \frac{1}{2} \int_M |\delta^g T\varphi|_h^2 \,dvol_g.
 \end{equation*}
 Quand $(M,g)$ est conform\'ement plate, les applications C--harmoniques sont biharmoniques pour le bon changement conforme de m\'etrique. 
\end{remarque}

\subsection{Quand $M$ est de dimension impaire}

Soit $(X^{n+1},g_+)$ une vari\'et\'e AHE, on reprend les notations du th\'eor\`eme \ref{theoreme_g+}. Notons $\tilde{\mathcal{H}}$ l'espace des applications $C^{\infty}$ de $\overline{X}$ dans $N$ qui v\'erifient la condition d'harmonicit\'e asymptotique du th\'eor\`eme \ref{theoreme_phi_impair}, on va renormaliser l'\'energie par rapport \`a $g_+$ et $h$, de ces applications sur la vari\'et\'e compacte \`a bord $X_{\rho}:=\{r \geq \rho \}$, quand $\rho$ est un r\'eel dans $]0,\varepsilon[$ qui tend vers $0$. On obtient alors le th\'eor\`eme suivant\,:
\begin{thrm}
\label{theoreme_fonctionnelle_impaire}
 Soit $\tilde{\varphi} \in \tilde{\mathcal{H}}$, alors le d\'eveloppement asymptotique de $E(\tilde{\varphi},\rho)$ en $\rho=0$ est de la forme suivante\,:
 \begin{equation*}
  E_{g_+}(\tilde{\varphi},\rho) = E_{2-n}\,\rho^{2-n} + \cdots + E_{-1}\,\rho^{-1} + C + o(1),
 \end{equation*}
 o\`u les points d\'esignent des termes en puissances impaires de $\rho$ de $2-n$ \`a $-1$ qui sont enti\`erement d\'etermin\'es par $\tilde{\varphi}$ et des termes de courbures de $g$ et de $h$.
 
 Le terme constant $C$ est un invariant conforme, c'est--\`a--dire que $C(\overline{g},\tilde{\varphi}) = C(g,\tilde{\varphi})$ pour tout $\overline{g}$ dans $[g]$. De plus, sa variation infinit\'esimale dans $\tilde{\mathcal{H}}$ ne d\'epend que du terme ind\'etermin\'e $U_n$ du d\'eveloppement asymptotique de $U$ dans le th\'eor\`eme \ref{theoreme_phi_impair}, et elle est donn\'ee par la formule suivante\,:
 \begin{equation*}
  d_{\tilde{\varphi}} C(Z) = - n \int_M \langle Z_0,U_n \rangle_h \,dvol_g,
 \end{equation*} 
 o\`u $Z\in\Gamma(\tilde{\varphi}^*TN)$ est une d\'eformation infinit\'esimale de $\tilde{\varphi}$ dans $\tilde{\mathcal{H}}$ et $Z_0$ est la restriction de $Z$ au bord.
\end{thrm}

\begin{remarque}
 Pour d\'efinir notre invariant conforme quand $n$ \'etait pair, on avait seulement besoin d'une partie du d\'eveloppement limit\'e de $g_r$ et de $\tilde{\varphi}$, qui d\'ependaient exclusivement de termes de courbure de $g$ et de la valeur de $\tilde{\varphi}$ sur $M$, ainsi on avait une fonctionnelle parfaitement d\'efinie sur les applications de $M$ dans $N$. Maintenant si $n$ est impair, on a besoin de toute la m\'etrique $g_r$ et de toute l'application $\tilde{\varphi}$ pour avoir notre invariant conforme, ce qui fait appara\^itre des termes qui sont ind\'ependants du bord rendant impossible la construction d'une fonctionnelle analogue \`a celle du th\'eor\`eme \ref{theoreme_fonctionnelle_paire}.
\end{remarque}

L'analogie entre l'\'etude du d\'eveloppement asymptotique de l'\'energie et celui du volume, d\'ecrite dans le paragraphe pr\'ec\'edent se poursuit dans le cas impair. Notre terme $C$ joue maintenant le r\^ole du volume renormalis\'e $V$ (le terme constant dans le d\'eveloppement asymptotique du volume), ces termes sont ind\'ependants du choix du repr\'esentant dans l'infini conforme de $g_+$. De plus, Anderson, pour $n=3$ (voir \cite{MR1825268}) et Albin dans le cas g\'en\'eral (voir \cite{MR2509323}), ont montr\'e que la variation infinit\'esimale de $V$ ne d\'epend que du terme ind\'etermin\'e $g^{(n)}$ du d\'eveloppement de la m\'etrique $g_r$, jouant ainsi le m\^eme r\^ole que $U_n$ par rapport \`a $C$, compl\'etant notre parall\`ele.

\begin{proof}
 Remarquons d\'ej\`a que\,:
 \begin{equation*}
  E_{g_+}(\tilde{\varphi},\rho) = E_{g_+}(\tilde{\varphi},\epsilon) + \frac{1}{2} \,\int_{M \times [\rho;\varepsilon]} |T\tilde{\varphi}|^2_{g_+,h} dvol_{g_+}
 \end{equation*}
 et qu'avec les th\'eor\`emes \ref{theoreme_g+} et \ref{theoreme_phi_impair}, on obtient\,:
 \begin{align*}
 |T\tilde{\varphi}|^2_{g_+,h} dvol_{g_+}
 &= r^{1-n} |T\tilde{\varphi}|^2_{dr^2+g_r,h} dvol_{g_r} \,dr \\
 &= \big( e_{(0)}\,r^{1-n} + e_{(2)}\,r^{3-n} + \cdots + e_{(n-1)} + e_{(n)}\,r + \dots \big) \ dvol_g \,dr,
 \end{align*}
 ainsi $E_{g_+}$ admet bien le d\'eveloppement asymptotique annonc\'e. La preuve de l'invariance conforme du terme constant est un r\'esultat classique (voir \cite{MR1758076}).

 Soient $(\tilde{\varphi}_t)_{t\in[0,1]}$ une famille \`a $1$--param\`etre de $\tilde{\mathcal{H}}$ et $Z := \big[\partial_t\tilde{\varphi}_t\big]_{t=0}$, on obtient, en proc\'edant comme avant, que la diff\'erentielle de $F$ en $\tilde{\varphi}$ dans la direction $Z$ est \'egale au terme constant de l'expression suivante\,:
 \begin{equation*}
  \int_{X_{\rho}} \langle Z,\delta^{g_+}T\tilde{\varphi} \rangle_h \,dvol_{g_+} - \int_M \rho^{-n+1}\, \langle Z , \partial_{\rho}\tilde{\varphi} \rangle_h \,dvol_{g_\rho}.
 \end{equation*}
 Comme $Z=O(1)$, $\delta^{g_+}T\tilde{\varphi}=O(r^{n+1})$ et $dvol_{g_+}=O(r^{-n-1})$, il n'y a pas de terme constant dans la premi\`ere int\'egrale.
 D'apr\`es les th\'eor\`emes \ref{theoreme_phi_impair} et \ref{theoreme_g+}, la d\'eformation $Z$ admet un d\'eveloppement pair jusqu'au rang $n$, le terme $\partial_{\rho}\tilde{\varphi}$ admet un d\'eveloppement impair jusqu'au rang $n-2$ et $dvol_{g_\rho}$ admet un d\'eveloppement pair jusqu'au rang $n$. Ainsi le terme recherch\'e provient donc du terme en $\rho^{n-1}$ de $\partial_{\rho}\tilde{\varphi}$, qui est exactement $n\,U_n$ ce qui donne bien\,:
 \begin{equation*}
  d_{\tilde{\varphi}} C(Z) = - n \int_M \langle Z_0,U_n \rangle_h \,dvol_g,
 \end{equation*}
\end{proof}

\subsection{Exemple}

La proposition suivante nous donne la valeur de notre fonctionnelle $\mathcal{E}$ pour des applications harmoniques d'une vari\'et\'e $(M,g)$ d'Einstein de dimension paire dans une vari\'et\'e riemannienne $(N,h)$ quelconque.
\begin{proposition}
 \label{proposition_fonctionnelle_einstein_harmonique}
 Soient $(M^n,g)$ une vari\'et\'e d'Einstein de dimension paire avec $\ric^g=4\lambda(n-1)\,g$, $(N,h)$ une vari\'et\'e riemannienne et $\varphi$ une application harmonique de $(M,g)$ dans $(N,h)$, alors notre fonctionnelle en $\varphi$ est \'egale \`a\,:
 \begin{equation*}
 \mathcal{E}_g(\varphi) = 2^{n-3} \lambda^{n/2-1} (n-2)! \int_M |T\varphi|^2_{g,h} \,dvol_g.
\end{equation*}
 Dans le cas particulier de l'identit\'e de $(M,g)$, on obtient ainsi\,:
 \begin{equation*}
  \mathcal{E}_g(id_M) = 2^{n-3} \lambda^{n/2-1} (n-2)!\,n \,vol_g(M),
 \end{equation*}
 o\`u $vol_g(M)$ d\'esigne le volume de $M$ par rapport \`a $g$.
\end{proposition}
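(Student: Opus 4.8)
Le plan est d'exploiter le fait qu'une vari\'et\'e d'Einstein poss\`ede une m\'etrique de Poincar\'e explicite et que, dans ce cadre, le remplissage harmonique du th\'eor\`eme \ref{theoreme_phi_pair} est presque constant en $r$. Comme $\ric^g = 4\lambda(n-1)g$, la formule (\ref{g_+_einstein}) donne $g_r = (1-\lambda r^2)^2 g$. D'apr\`es la d\'emonstration de la proposition \ref{proposition_einstein_C-harmonique}, lorsque $\varphi$ est harmonique toutes les d\'eriv\'ees de $\tilde\varphi$ par rapport \`a $r$ s'annulent en $r=0$ jusqu'\`a l'ordre $n$ et $H^g(\varphi)=0$; je commencerais donc par reformuler ceci en disant que la section $U$ du th\'eor\`eme \ref{theoreme_phi_pair} v\'erifie $U = O(r^n)$. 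Dans l'identification exponentielle on a alors $\tilde\varphi = \varphi\circ p_M + O(r^n)$, d'o\`u $\partial_r\tilde\varphi = O(r^{n-1})$ et la composante de $T\tilde\varphi$ dans les directions de $M$ vaut $T\varphi + O(r^n)$.

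Ensuite, je calculerais la densit\'e d'\'energie qui intervient dans la formule (\ref{fonctionnelle_dimension_n}). On d\'ecompose $|T\tilde\varphi|^2_{dr^2+g_r,h}\,dvol_{g_r} = \big(|T\tilde\varphi|^2_{g_r,h} + |\partial_r\tilde\varphi|^2_h\big)\,dvol_{g_r}$, le terme radial \'etant $O(r^{2n-2})$. Comme $g_r = (1-\lambda r^2)^2 g$, on a $|T\varphi|^2_{g_r,h} = (1-\lambda r^2)^{-2}|T\varphi|^2_{g,h}$ et $dvol_{g_r} = (1-\lambda r^2)^n\,dvol_g$, si bien que
\begin{equation*}
 |T\tilde\varphi|^2_{dr^2+g_r,h}\,dvol_{g_r} = (1-\lambda r^2)^{n-2}\,|T\varphi|^2_{g,h}\,dvol_g + O(r^n).
\end{equation*}

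La derni\`ere \'etape consiste \`a injecter ceci dans (\ref{fonctionnelle_dimension_n})\,: seule la d\'eriv\'ee $(n-2)$--i\`eme en $r=0$ intervenant, les restes en $O(r^n)$ et $O(r^{2n-2})$ ne contribuent pas, et il reste
\begin{equation*}
 \mathcal{E}_g(\varphi) = \frac{n-1}{2\,n!\,a_n}\,\partial_r^{n-2}\big[(1-\lambda r^2)^{n-2}\big]_{r=0}\int_M |T\varphi|^2_{g,h}\,dvol_g.
\end{equation*}
Le d\'eveloppement bin\^omial donne $\partial_r^{n-2}\big[(1-\lambda r^2)^{n-2}\big]_{r=0} = (n-2)!\,\binom{n-2}{n/2-1}(-\lambda)^{n/2-1}$; en rempla\c{c}ant $a_n = \frac{(-1)^{n/2-1}}{2^{n-1}(n/2)!(n/2-1)!}$ puis en simplifiant \`a l'aide de $\binom{n-2}{n/2-1} = (n-2)!/((n/2-1)!)^2$, de $n! = n(n-1)(n-2)!$ et de $(n/2)!/(n/2-1)! = n/2$, on obtient le coefficient $2^{n-3}\lambda^{n/2-1}(n-2)!$, d'o\`u la premi\`ere \'egalit\'e; le cas de l'identit\'e en d\'ecoule aussit\^ot car $|T\,id_M|^2_{g,g} = \tr g = n$. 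La seule v\'erification \`a ne pas n\'egliger (ce n'est pas un v\'eritable obstacle) est que les termes d'erreur issus de $\tilde\varphi - \varphi\circ p_M$ sont d'ordre strictement sup\'erieur \`a $n-2$ en $r$, donc sans effet sur $\partial_r^{n-2}$ en $0$, ce qui r\'esulte de $U=O(r^n)$ et de $n>n-2$.
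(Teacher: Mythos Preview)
Your proof is correct and follows essentially the same route as the paper's own argument: both start from formula (\ref{fonctionnelle_dimension_n}), use $g_r=(1-\lambda r^2)^2g$ together with the vanishing of the $r$-derivatives of $\tilde\varphi$ established in proposition \ref{proposition_einstein_C-harmonique}, reduce the density to $(1-\lambda r^2)^{n-2}|T\varphi|^2_{g,h}\,dvol_g$ up to negligible terms, and then perform the same binomial/factorial simplification. Your presentation is slightly more explicit about the error terms (writing $U=O(r^n)$ and tracking the $O(r^n)$ and $O(r^{2n-2})$ remainders), while the paper absorbs the factor $(1-\lambda r^2)^n$ from the volume form directly and invokes the vanishing of derivatives more tersely, but the substance is identical.
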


\begin{proof}
 D'apr\`es les \'egalit\'es (\ref{fonctionnelle_dimension_n}) et (\ref{g_+_einstein}), on a
 \begin{align*}
  \mathcal{E}_g(\varphi)
   &= \frac{n-1}{2\,n!\,a_n} \int_M \Big[ \partial_r^{n-2} \big( |T\tilde{\varphi}|^2_{dr^2+g_r} \,dvol_{g_r} \big) \Big]_{r=0} \\
   &= \frac{n-1}{2\,n!\,a_n} \int_M \big[ \partial_r^{n-2} \big( (1-\lambda\,r^2)^n |T\tilde{\varphi}|^2_{dr^2+g_r} \big) \big]_{r=0} \,dvol_g,
 \end{align*}
 comme $\varphi$ est harmonique, on sait avec (\ref{proposition_einstein_C-harmonique}), que les d\'eriv\'ees de $\tilde{\varphi}$ par rapport \`a $r$ s'annulent quand $r=0$, ainsi
 \begin{equation*}
  \big[ \partial_r^{n-2} \big( (1-\lambda\,r^2)^n |T\tilde{\varphi}|^2_{dr^2+g_r} \big) \big]_{r=0} = (-\lambda)^{n/2-1} \,\frac{\big((n-2)!\big)^2}{\big((n/2-1)!\big)^2} \,|T\varphi|^2_{g,h},
 \end{equation*}
 ce qui donne bien pour la fonctionnelle en $\varphi$\,:
 \begin{equation*}
  \mathcal{E}_g(\varphi) = 2^{n-3} \lambda^{n/2-1} (n-2)! \int_M |T\varphi|^2_{g,h} \,dvol_g.
 \end{equation*}
\end{proof}

\section{Etude en basses dimensions.}

\subsection{La dimension $4$.}

\subsubsection{Ecriture explicite}

En dimension $4$, on peut expliciter facilement la fonctionnelle du th\'eor\`eme \ref{theoreme_fonctionnelle_paire} et l'\'equation de ses points critiques, c'est l'objet du th\'eor\`eme suivant\,:
\begin{thrm}
 \label{theoreme-fonctionnelle-dimension4}
 Soient $(M^4,g)$ et $(N,h)$ deux vari\'et\'es riemanniennes, la fonctionnelle invariante conforme du th\'eor\`eme \ref{theoreme_fonctionnelle_paire} s'\'ecrit alors\,:
 \begin{equation*}
  \mathcal{E}_g^4 (\varphi) = \frac{1}{2} \int_M \Big( |\delta T\varphi|^2_h + \frac{2}{3}\scal |T\varphi|^2_{g,h} - 2\,(\ric \otimes h)(T\varphi,T\varphi) \Big) dvol,
 \end{equation*}
 o\`u $\delta$ d\'esigne la divergence sur le fibr\'e $\Omega(M) \times \varphi^*TN$. L'\'equation de ses points critiques est\,:
 \begin{equation*}
  \delta d\delta T\varphi + \delta (\frac{2}{3}\scal - 2\ric) T\varphi - \Se(\delta T\varphi) = 0,
 \end{equation*}
 o\`u $\ric$, $\scal$ et $dvol$ se rapportent \`a $g$ et $\Se$ est l'endomorphisme de $\varphi^* TN$ d\'efini de la mani\`ere suivante\,:
 \begin{equation*}
  \Se(X) = \sum_{i=1}^4 \R^h_{\textstyle{X,T\varphi(e_i)}} T\varphi(e_i),
 \end{equation*}
 o\`u $(e_1,\ldots,e_4)$ est une base orthonorm\'ee de $TM$ par rapport \`a $g$ et $\R^h$ est le tenseur de courbure de $(N,h)$.
\end{thrm}

\begin{remarque}
 Quand on travaille avec des fonctions, l'\'equation des points critiques de $\mathcal{E}_g^4$ est tout simplement l'\'equation du noyau de l'op\'erateur de Paneitz $\Pa_4$\,:
 \begin{equation*}
  \Pa_4 = \Delta^2 + \delta (\frac{2}{3}\scal - 2\ric)\,d,
 \end{equation*}
 o\`u $\Delta$ d\'esigne le laplacien de $g$.
\end{remarque}

\subsubsection{Rigidit\'e}

\begin{proposition}
 \label{proposition_harmonique=C-harmonique_dim4}
  Soient $(M^4,g)$ une vari\'et\'e d'Einstein de courbure scalaire positive ou nulle et $(N,h)$ une vari\'et\'e riemannienne de courbure sectionnelle n\'egative ou nulle, on se donne une application $\varphi$ qui est C--harmonique de $(M,[g])$ dans $(N,h)$, alors
 \begin{enumerate}
  \item l'application $\varphi$ est totalement g\'eod\'esique,
  \item si la courbure scalaire de $(M,g)$ est strictement positive, alors l'application $\varphi$ est constante,
  \item si la courbure sectionnelle de $(N,h)$ est strictement n\'egative, alors l'application $\varphi$ est constante ou a une g\'eod\'esique comme image.
 \end{enumerate}
\end{proposition}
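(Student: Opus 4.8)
The plan is to show that, under these hypotheses, every C--harmonic map $\varphi$ is in fact harmonic, and then to invoke the Eells--Sampson rigidity proposition. First I would exploit the Einstein condition: on $(M^4,g)$ one has $\ric=\frac14\scal\,g$ with $\scal$ constant, so the operator $\frac23\scal-2\ric$ reduces to the scalar $\frac16\scal$, and the C--harmonicity equation of Theorem \ref{theoreme-fonctionnelle-dimension4} becomes
\begin{equation*}
 \delta d\delta T\varphi+\frac16\scal\,\delta T\varphi-\Se(\delta T\varphi)=0.
\end{equation*}

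Next I would take the $L^2$--pairing of this equation with the section $\delta^g T\varphi$ of $\varphi^*TN$ and integrate by parts, using that $\delta$ is the formal adjoint of $d$ on $\Omega(M)\otimes\varphi^*TN$. This yields
\begin{equation*}
 \int_M|d\delta T\varphi|^2_h\,dvol_g+\frac16\int_M\scal\,|\delta T\varphi|^2_h\,dvol_g=\int_M\langle\Se(\delta T\varphi),\delta T\varphi\rangle_h\,dvol_g.
\end{equation*}
Since $\langle\Se(X),X\rangle_h=\sum_{i=1}^4\langle\R^h_{X,T\varphi(e_i)}T\varphi(e_i),X\rangle_h$ is a sum of sectional curvatures of $(N,h)$ weighted by the nonnegative factors $|X|^2|T\varphi(e_i)|^2-\langle X,T\varphi(e_i)\rangle^2$, the curvature hypothesis on $N$ makes the right--hand side $\le 0$; as the two terms on the left are $\ge 0$ (this is where $\scal\ge 0$ is used), they both vanish. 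Hence $d\delta T\varphi=0$, that is, $\delta^g T\varphi$ is parallel along $M$, and $\scal\,|\delta^g T\varphi|^2_h\equiv 0$.

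I would then deduce $\delta^g T\varphi\equiv 0$. If $\scal>0$ this is immediate; if $\scal=0$, so that $g$ is Ricci--flat, I would use the parallelism of $\delta^g T\varphi$: the $1$--form $\omega$ on $M$ given by $\omega(X):=\langle T\varphi(X),\delta^g T\varphi\rangle_h$ satisfies $\delta^g\omega=|\delta^g T\varphi|^2_h$ (a short computation in normal coordinates using $\nabla(\delta^g T\varphi)=0$ and $\tr_g\nabla T\varphi=-\delta^g T\varphi$), and integrating over the closed manifold $M$ forces $\delta^g T\varphi\equiv 0$. In all cases $\varphi$ is harmonic from $(M,g)$ into $(N,h)$, and the Eells--Sampson proposition \cite{MR0164306} applies: a harmonic map from a compact manifold with $\ric\ge 0$ (here $\ric=\frac14\scal\,g$) into a manifold with sectional curvature $\le 0$ is totally geodesic, which is (1); it is constant as soon as $\ric>0$ somewhere, which here holds everywhere when $\scal>0$, giving (2); and when the target has strictly negative sectional curvature it is constant or its image is a geodesic, giving (3).

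The main obstacle is the Ricci--flat case of the harmonicity step: there the zeroth order term of the equation vanishes, so no maximum principle is available and one genuinely needs the divergence identity above (in substance Jiang's argument that a biharmonic map into a nonpositively curved manifold is harmonic) in order to get $\delta^g T\varphi\equiv 0$. One must also pin down the sign conventions for $\R^h$ and for $\delta$ so that the curvature term $\langle\Se(X),X\rangle_h$ carries the sign used above.
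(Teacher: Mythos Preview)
Your proof is correct and follows essentially the same route as the paper's: simplify the C--harmonicity equation via the Einstein condition, pair it with $\delta T\varphi$, integrate by parts, and use the sign of $\langle\Se(\cdot),\cdot\rangle_h$ to force $d\delta T\varphi=0$ and then $\delta T\varphi=0$, before invoking Eells--Sampson. The only cosmetic difference is in the Ricci--flat step: the paper simply writes $0=\int_M\langle d\delta T\varphi,T\varphi\rangle_{g,h}\,dvol=\int_M|\delta T\varphi|^2_h\,dvol$ via the adjoint relation, whereas you recast the same integration by parts as a divergence identity for the $1$--form $\omega(X)=\langle T\varphi(X),\delta T\varphi\rangle_h$ using the parallelism of $\delta T\varphi$; both are the same computation.
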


\begin{proof}
 On va d'abord montrer que sous les hypoth\`eses de la proposition, la notion de C--harmonicit\'e se confond avec celle d'harmonicit\'e. On sait d\'ej\`a que l'harmonicit\'e implique la C--harmonicit\'e d'apr\`es le corollaire \ref{proposition_einstein_C-harmonique}. Soit $\varphi$ une application C--harmonique de $(M,[g])$ dans $(N,h)$, la m\'etrique $g$ \'etant d'Einstein, on a\,:
 \begin{equation*}
  \delta d\delta T\varphi + \frac{1}{6}\scal\delta T\varphi - \Se(\delta T\varphi) = 0.
 \end{equation*}
 En prenant le produit scalaire de l'\'egalit\'e pr\'ec\'edente contre $\delta T\varphi$ par rapport \`a $g$ et $h$, on obtient avec une int\'egration par parties et en se souvenant que $\langle \Se(\delta T\varphi),\delta T\varphi \rangle_h \leq 0$\,:
 \begin{equation*}
  \int_M \Big( |d\delta T\varphi|^2_{g,h} + \frac{1}{6}\scal |\delta T\varphi|^2_h \Big) \,dvol \leq 0.
 \end{equation*}
 Si la courbure scalaire de $g$ est strictement positive, alors $\varphi$ est harmonique de $(M,g)$ dans $(N,h)$. Maintenant si la courbure scalaire de $g$ est nulle, alors $d\delta T\varphi=0$ et dans ces conditions, le produit scalaire de $d\delta T\varphi$ contre $T\varphi$ par rapport \`a $(g,h)$ donne avec une int\'egration par parties\,:
 \begin{equation*}
  0 = \int_M  \langle d\delta T\varphi,T\varphi \rangle_{g,h} dvol = \int_M |\delta T\varphi|^2_h dvol.
 \end{equation*}
 Ainsi $\varphi$ est encore harmonique de $(M,g)$ dans $(N,h)$. Il suffit alors d'appliquer un r\'esultat de rigidit\'e sur les applications harmoniques due \`a Eells et Sampson dans \cite{MR0164306} pour conclure.
\end{proof}

D'apr\`es le th\'eor\`eme \ref{proposition_einstein_C-harmonique}, on sait que l'identit\'e sur une vari\'et\'e d'Einstein de dimension paire est C--harmonique, l'objet du corollaire ci--dessous est de donner une condition plus faible sur les vari\'et\'es de dimension $4$ pour que l'identit\'e reste C--harmonique.
\begin{corollaire}
 \label{corollaire_id_dim4}
 Soit $(M^4,g)$ une vari\'et\'e riemannienne, l'application identit\'e de $M$ est C--harmonique de $(M,[g])$ dans $(M,g)$ si et seulement si la courbure scalaire de $g$ est constante.
\end{corollaire}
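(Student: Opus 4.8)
L'idée est d'utiliser l'expression explicite de l'équation des points critiques de $\mathcal{E}_g^4$ donnée par le théorème \ref{theoreme-fonctionnelle-dimension4}, appliquée à l'application $\varphi = id_M$ de $(M,[g])$ dans $(M,g)$. Pour l'identité, l'application tangente $T\,id_M$ est simplement l'identité de $TM$ vue comme section de $\Omega^1(M)\otimes TM$, et cette section est parallèle pour la connexion canonique ; en particulier $\delta^g T\,id_M = 0$ (l'identité est harmonique, comme rappelé dans l'excerpt). Le terme $\delta d\delta T\varphi$ s'annule donc trivialement, ainsi que le terme de courbure $\Se(\delta T\varphi)$ puisqu'il contient $\delta T\varphi$ en facteur. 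Il ne reste dans l'équation de C--harmonicité que le terme $\delta\big((\tfrac{2}{3}\scal - 2\ric)\,T\,id_M\big)$. Ainsi $id_M$ est C--harmonique si et seulement si ce terme s'annule.

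La deuxième étape consiste à calculer $\delta^g\big((\tfrac{2}{3}\scal - 2\ric)\,T\,id_M\big)$. Comme $T\,id_M$ agit comme l'identité, appliquer l'endomorphisme $(\tfrac{2}{3}\scal\cdot\mathrm{Id} - 2\ric)$ à $T\,id_M$ revient à voir ce $(1,1)$--tenseur (ou le $(0,2)$--tenseur $\tfrac{2}{3}\scal\, g - 2\ric$) comme une $1$--forme à valeurs dans $TM$, et $\delta^g$ est alors la divergence riemannienne habituelle de ce champ de tenseurs symétriques. On a donc $\delta^g\big((\tfrac{2}{3}\scal\,g - 2\ric)\big) = \tfrac{2}{3}\,d\scal - 2\,\delta^g\ric$ (au signe de convention près pour $\delta$). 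La deuxième identité de Bianchi contractée donne $\delta^g \ric = -\tfrac{1}{2}\,d\scal$ (encore une fois selon la convention de signe de $\delta$), de sorte que l'expression se réduit à un multiple non nul de $d\scal$, précisément $\big(\tfrac{2}{3} + 1\big)\,d\scal = \tfrac{5}{3}\,d\scal$ à une constante près. La conclusion est alors immédiate : $id_M$ est C--harmonique si et seulement si $d\scal = 0$, c'est--à--dire si et seulement si la courbure scalaire de $g$ est constante.

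Le point le plus délicat est la vérification soigneuse des conventions de signe : il faut s'assurer de la cohérence entre la définition de $\delta^g$ sur $\Omega^1(M)\otimes\varphi^*TN$ utilisée dans tout l'article, la définition de la divergence d'un $2$--tenseur symétrique, et le signe dans la Bianchi contractée $2\,\mathrm{div}\,\ric = d\scal$. Une fois les signes fixés, il faut seulement contrôler que les deux contributions ($\tfrac{2}{3}d\scal$ provenant du terme scalaire et $-2\,\delta\ric$ provenant du terme de Ricci) ne se compensent pas accidentellement — ce qui est le cas ici, les coefficients $\tfrac{2}{3}$ et $+1$ étant tous deux strictement positifs après usage de Bianchi. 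Aucune hypothèse de courbure sur $M$ n'est nécessaire, et le résultat est bien une équivalence : si $\scal$ est constante le terme restant s'annule, et réciproquement l'annulation de $\delta\big((\tfrac{2}{3}\scal-2\ric)T\,id_M\big)$ force $d\scal = 0$.
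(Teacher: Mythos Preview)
Your approach is essentially identical to the paper's: apply the explicit C--harmonicity equation of Th\'eor\`eme~\ref{theoreme-fonctionnelle-dimension4} to $\varphi=id_M$, use $\delta T\,id_M=0$ to kill the terms $\delta d\delta T\varphi$ and $\Se(\delta T\varphi)$, and reduce to $\delta\big((\tfrac{2}{3}\scal-2\ric)\,\mathrm{Id}\big)=0$, which via the contracted Bianchi identity is equivalent to $d\scal=0$. The paper's own proof is a single sentence stating exactly this.

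There is, however, a sign inconsistency in your final numerical check. If $\delta$ is the codifferential (the convention for which $\delta(\scal\cdot\mathrm{Id})=-d\scal$), then contracted Bianchi gives $\delta\ric=-\tfrac{1}{2}d\scal$, and
\[
\delta\big(\tfrac{2}{3}\scal\cdot\mathrm{Id}-2\ric\big)=-\tfrac{2}{3}d\scal+d\scal=\tfrac{1}{3}d\scal.
\]
If instead $\delta$ is the divergence (so $\delta(\scal\cdot\mathrm{Id})=+d\scal$), then $\delta\ric=+\tfrac{1}{2}d\scal$ and one finds $-\tfrac{1}{3}d\scal$. Your value $\tfrac{5}{3}d\scal$ comes from using one convention in the first step and the other in the Bianchi step. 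The correct coefficient is $\pm\tfrac{1}{3}$; since this is still nonzero, your conclusion is unaffected, but you flagged the sign-tracking as the delicate point, so fix one convention and carry it through consistently.
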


\begin{proof}
 D'apr\`es le th\'eor\`eme \ref{theoreme-fonctionnelle-dimension4}, l'identit\'e est C--harmonique si et seulement si  $\frac{2}{3}\delta\scal - 2\, \delta\ric= 0$, ce qui est \'equivalent \`a ce que la courbure scalaire soit constante.
\end{proof}

\subsubsection{D\'emonstration du th\'eor\`eme \ref{theoreme-fonctionnelle-dimension4}}
Soient $(M^4,g)$ une vari\'et\'e conforme et $g_+ = r^{-2}(dr^2+g_r)$ sa m\'etrique de Poincar\'e, d'apr\`es (\ref{met_poincare_4}), $g_r$ s'\'ecrit dans un voisinage du bord\,:
\begin{equation}
 \label{equation_g_r_4}
 g_r=g + \big(\frac{1}{12}\scal g-\frac{1}{2}\ric\big) r^2 + O(r^4\log{r}).
\end{equation}
On se donne $\varphi$ une application $C^{\infty}$ de $M$ dans $N$ et $\tilde{\varphi}$ l'application du th\'eor\`eme \ref{theoreme_phi_pair}, d'apr\`es (\ref{equation_rec_derives_phi}), (\ref{equation_rec_derives_impaires_phi}) et (\ref{equation_H^g}), on a $\varphi^{(1)} = 0$, $\varphi^{(2)} = -\frac{1}{2}\, \delta T\varphi$, $\varphi^{(3)} = 0$ et en notant $g''=[\partial^2_r g_r]_{r=0}$, on obtient pour le terme $H^g(\varphi)$\,:
\begin{align*}
 H^g(\varphi)
 &= \frac{1}{8} \Big[ (\nabla^h_{\!\textstyle{\partial_r\tilde{\varphi}}})^2 (\delta^{g_r}T\tilde{\varphi} - \frac{\tr^{g_r}\!g'_r}{2}\, \partial_r\tilde{\varphi}) \Big]_{r=0} \\
 &= \frac{1}{8} \Big( \Big[(\nabla^h_{\!\textstyle{\partial_r\tilde{\varphi}}})^2 (\delta T\tilde{\varphi})\Big]_{r=0} + \Big[ (\nabla_{\!\textstyle{\partial_r\tilde{\varphi}}})^2 (\delta^{g_r}T\varphi)\Big]_{r=0} - (\tr g'')\,\varphi^{(2)} \Big).
\end{align*}
Le premier terme se calcule avec (\ref{equation_P^2_phi}),
\begin{equation*}
 \Big[(\nabla^h_{\!\textstyle{\partial_r\tilde{\varphi}}})^2 (\delta T\tilde{\varphi})\Big]_{r=0}
    = (\delta d - \Se)\, \varphi^{(2)}
    = -\frac{1}{2}\,\delta d\delta T\varphi + \frac{1}{2}\,\Se(\delta T\varphi).
\end{equation*}
et les deuxi\`eme et troisi\`eme termes s'obtiennent avec (\ref{equation_P^2_g}) et (\ref{equation_g_r_4})\,:
\begin{equation*}
 \Big[(\nabla^h_{\!\textstyle{\partial_r\tilde{\varphi}}})^2 (\delta^{g_r}T\varphi)\Big]_{r=0} - (\tr g'')\,\varphi^{(2)} 
 = - \frac{1}{3}\,\delta(\scal T\varphi) + \delta(\ric T\varphi).
\end{equation*}
o\`u $\delta$ et la trace sont pris par rapport \`a $g$. On obtient ainsi\,:
\begin{equation*}
 H^g(\varphi) = - \frac{1}{16} \Big( \delta d\delta T\varphi - \Se(\delta T\varphi) + \delta\big(\frac{2}{3}\scal -2\ric\big) T\varphi \Big).
\end{equation*}
D'apr\`es (\ref{fonctionnelle_dimension_n}), notre fonctionnelle $\mathcal{E}_g^4$ est \'egale \`a\,:
\begin{align*}
 \mathcal{E}_g^4(\varphi)
  &= \frac{1}{16\,a_4} \int_M \partial^2_r \Big[|T\tilde{\varphi}|^2_{dr^2+g_r,h} dvol_{g_r}\Big]_{r=0} \\
  &= - \int_M \Big(\partial^2_r\Big[|T\tilde{\varphi}|^2_{dr^2+g_r,h}\Big]_{r=0} - \frac{\scal}{6}\, |T\varphi|^2_{g,h} \Big) dvol
\end{align*}
et on conclut en regardant le premier terme sous l'int\'egrale avec (\ref{equation_g_r_4})\,:
\begin{align*}
 \partial^2_r \Big[|T\tilde{\varphi}|^2_{dr^2+g_r,h}\Big]_{r=0}
 &= 2\,\big\langle \big[(\nabla^h_{\!\textstyle{\partial_r\tilde{\varphi}}})^2 T\tilde{\varphi}\big]_{r=0}, T\varphi\big\rangle_{g,h} - \,|T\varphi|^2_{g'',h} + 2\, |\varphi^{(2)}|^2_h\\
 &= - \big\langle d\delta T\varphi, T\varphi \big\rangle_{g,h} + \frac{1}{2}\, |\delta T\varphi|_h^2 - \frac{1}{6}\scal|T\varphi|^2_{g,h} \\
 &\ \ \ \ \  + \ric(T\varphi,T\varphi).
\end{align*}

\subsection{La dimension $6$}

\subsubsection{\'Ecriture explicite}

Obtenir des \'ecritures explicites de la condition de C--harmonicit\'e devient rapidement tr\`es compliqu\'e quand la dimension de $M$ augmente, mis \`a part le cas des fonctions d'une vari\'et\'e d'Einstein trait\'e dans le corollaire \ref{corollaire_fonction_einstein}, les calculs deviennent rapidement pharaoniques. Toutefois en supposant que la vari\'et\'e de d\'epart $M$ soit de dimension $6$ et que la vari\'et\'e d'arriv\'ee $N$ soit sym\'etrique, on a le r\'esultat suivant\,:

\begin{thrm}
 \label{theoreme-fonctionnelle-dimension6-einstein}
 Soient $(M,g)$ une vari\'et\'e d'Einstein de dimension $6$ avec $\ric^g=20\, \lambda\, g$ et $(N,h)$ une vari\'et\'e riemannienne sym\'etrique, on se donne une application $\varphi$ qui est $C^{\infty}$ de $M$ dans $N$. Alors $\varphi$ est C--harmonique de $(M,[g])$ dans $(N,h)$ si et seulement si
 \begin{equation}
  (\delta d - \Se + 16\,\lambda\,)(\delta d - \Se + 24\,\lambda)\,\delta T\varphi - 2\sum_{i=1}^6 \R^h_{\textstyle{\delta T\varphi,T\varphi(e_i)}} (\nabla^h_{\!\textstyle{T\varphi(e_i)}} \delta T\varphi) = 0.
 \end{equation}
 o\`u $\nabla^h$ est la connexion de $h$ sur $\varphi^*TN$, $\delta$ et $d$ se rapportent \`a $g$, et $\Se$ est d\'efini de mani\`ere analogue \`a la dimension $4$\,:
 \begin{equation}
  \Se(X) = \sum_{i=1}^6 \R^h_{\textstyle{X,T\varphi(e_i)}} T\varphi(e_i),
 \end{equation}
 o\`u $(e_1,\ldots,e_6)$ est une base orthonorm\'ee de $(M,g)$ et $R^h$ est le tenseur de courbure de $(N,h)$.
 
 De plus, notre fonctionnelle invariante conforme s'\'ecrit\,:
 \begin{equation*}
 \mathcal{E}_g^6(\varphi) = \frac{1}{2} \int_M \big( |d\delta T\varphi|^2_{g,h} - \langle \Se(\delta T\varphi),\delta T\varphi \rangle_h + 40\,\lambda\,|\delta T\varphi|^2_h + 384\,\lambda^2\,|T\varphi|^2_{g,h} \big) \,dvol.
 \end{equation*}
\end{thrm}

Toutefois, quand la vari\'et\'e $(M,g)$ est seulement riemannienne, on peut encore calculer la condition de C--harmonicit\'e et la valeur de notre fonctionnelle pour l'identit\'e de $(M,[g])$ dans $(M,g)$, c'est l'objet du th\'eor\`eme suivant\,:

\begin{thrm}
 \label{theoreme-fonctionnelle-dimension6-id}
 Soit $(M,g)$ une vari\'et\'e de dimension $6$, alors l'identit\'e est une application C--harmonique de $(M,[g])$ dans $(M,g)$ si et seulement si\,:
 \begin{equation*}
  (\Delta + \frac{5}{20}\scal g - \frac{7}{4} \ric)\, d\scal - \frac{5}{2}\tr(\nabla_{\textstyle{\ric}} \ric) + 20\, \delta\B + \frac{5}{4}\, d(|\ric|^2) = 0,
 \end{equation*}
 o\`u $\scal$, $\ric$ et $\B$ d\'esignent respectivement la courbure scalaire, le tenseur de Ricci et le tenseur de Bach de $g$.
 
 De plus, notre fonctionnelle en l'identit\'e est \'egale \`a\,:
 \begin{equation*}
  \mathcal{E}^6_g(id) = \frac{2}{25} \int_M \scal^2 \,dvol_g.
 \end{equation*}
\end{thrm}

\subsubsection{D\'emonstration du th\'eor\`eme \ref{theoreme-fonctionnelle-dimension6-einstein}}

Avec les \'egalit\'es (\ref{phi''}), (\ref{equation_rec_derives_phi}) et (\ref{equation_P^2_phi}), on obtient $\varphi^{(2)} = -\frac{1}{4}\,\delta T\varphi$ et $\varphi^{(4)} = \frac{3}{8}\,(\delta d - \Se + 8\,\lambda)\,\delta T\varphi$, ce qui donne avec (\ref{equation_H^g})\,:
\begin{equation}
 \label{H_dim6}
 144\, H^g(\varphi) = \big[(\nabla^h_{\!\textstyle{\partial_r\tilde{\varphi}}})^4 \delta T\tilde{\varphi}\big]_{r=0} + 12\, \lambda\, (\delta d - \Se + 12\,\lambda)\,\delta T\varphi.
\end{equation}
Soit $(e_1,\ldots,e_6)$ une base orthonorm\'ee de $(M,g)$, on obtient les deux \'egalit\'es suivantes en intervertissant les d\'eriv\'ees par rapport \`a $i$ et $r$\,:
\begin{align*}
 \big[(\nabla^h_{\!\textstyle{\partial_r\tilde{\varphi}}})^2 T\varphi(e_i)\big]_{r=0}
  &=  \nabla^h_{\!\textstyle{T\varphi(e_i)}} \varphi^{(2)} \\
 \big[(\nabla^h_{\!\textstyle{\partial_r\tilde{\varphi}}})^4 \big(T\tilde{\varphi}(e_i)\big)\big]_{r=0}
  &= \nabla^h_{\!\textstyle{T\varphi}(e_i)} \varphi^{(4)} + 3  \R^h_{\textstyle{\,\varphi^{(2)},T\tilde{\varphi}(e_i)}} \varphi^{(2)}.
\end{align*}
Int\'eressons nous au premier terme de (\ref{H_dim6})\,:
\begin{align*}
 (\nabla^h_{\!\textstyle{\partial_r\tilde{\varphi}}})^4 \delta T\tilde{\varphi} =
 & - \nabla^h_{\!\textstyle{T\tilde{\varphi}(e_i)}} (\nabla^h_{\!\textstyle{\partial_r\tilde{\varphi}}})^4 \big(T\tilde{\varphi}(e_i)\big) \\
 &  - (\nabla^h_{\!\textstyle{\partial_r\tilde{\varphi}}})^3 \big( \R^h_{\textstyle{\partial_r\tilde{\varphi},T\tilde{\varphi}(e_i)}} T\tilde{\varphi}(e_i) \big) \\
 &  - (\nabla^h_{\!\textstyle{\partial_r\tilde{\varphi}}})^2 \Big( \R^h_{\textstyle{\partial_r\tilde{\varphi},T\tilde{\varphi}(e_i)}} \big( \nabla^h_{\!\textstyle{\partial_r\tilde{\varphi}}} T\tilde{\varphi}(e_i) \big) \Big)  \\
 &  - \nabla^h_{\!\textstyle{\partial_r\tilde{\varphi}}} \Big(\R^h_{\textstyle{\partial_r\tilde{\varphi},T\tilde{\varphi}(e_i)}} \big((\nabla^h_{\!\textstyle{\partial_r\tilde{\varphi}}})^2 T\tilde{\varphi}(e_i)\big) \Big) \\
 &  - \R^h_{\textstyle{\partial_r\tilde{\varphi},T\tilde{\varphi}(e_i)}} \big((\nabla^h_{\!\textstyle{\partial_r\tilde{\varphi}}})^3 T\tilde{\varphi}(e_i)\big),
\end{align*}
comme $(N,h)$ est sym\'etrique et que les d\'eriv\'ees premi\`ere et troisi\`eme de $\tilde{\varphi}$ par rapport \`a $r$ s'annulent sur le bord, on a avec l'identit\'e de Bianchi\,:
\begin{equation*}
 \Big[(\nabla^h_{\!\textstyle{\partial_r\tilde{\varphi}}})^4 \delta T\tilde{\varphi}\Big]_{r=0} = (\delta d -\Se) \varphi^{(4)} - 12\R^h_{\textstyle{\,\varphi^{(2)},T\varphi(e_i)}} (\nabla^h_{\!\textstyle{T\varphi(e_i)}} \varphi^{(2)}).
\end{equation*}
D'apr\`es l'expression de $\varphi^{(2)}$ et $\varphi^{(4)}$, on obtient bien\,:
\begin{equation*}
  384\, H^g(\varphi) = (\delta d - \Se + 16\,\lambda\,)(\delta d - \Se + 24\,\lambda)\,\delta T\varphi - 2\R^h_{\textstyle{\delta T\varphi,T\varphi(e_i)}} (\nabla^h_{\!\textstyle{T\varphi(e_i)}} \delta T\varphi),
\end{equation*}
qui est la condition de C--harmonicit\'e \'enonc\'ee.

Nous allons calculer maintenant la fonctionnelle invariante conforme. La vari\'et\'e $(M,g)$ satisfait la condition d'Einstein, alors on a $g_r=(1-\lambda\,r^2)^2g$ et avec la formule (\ref{fonctionnelle_dimension_n}) et les notations du th\'eor\`eme \ref{theoreme_phi_pair}, on obtient
\begin{equation*}
 \mathcal{E}_g^6(\varphi) = \frac{4}{3} \,\int_M \Big[ \partial^4_r \big( (1-\lambda\,r^2)^6\, |T\tilde{\varphi}|^2_{dr^2+g_r,h} \big) \Big]_{r=0} \,dvol_g.
\end{equation*}
D'apr\`es les expressions de $\varphi^{(2)}$ et $\varphi^{(4)}$, on a\,:
\begin{align*}
 \big[ \partial^2_r (|T\tilde{\varphi}|^2_{g,h}) \big]_{r=0}
  &= -\frac{1}{2} \langle d\delta T\varphi,T\varphi \rangle_{g,h}, \\
 \big[ \partial^4_r (|T\tilde{\varphi}|^2_{g,h}) \big]_{r=0}
  &= \frac{3}{4} \langle d(\delta d - \Se + 8\lambda)\delta T\varphi,T\varphi \rangle_{g,h}\! - \frac{3}{8} \langle \Se(\delta T\varphi),\delta T\varphi \rangle_h\! + \frac{3}{8} |d\delta T\varphi|^2_{g,h},
\end{align*}
ce qui donne pour le terme sous l'int\'egrale\,:
\begin{align*}
 \lefteqn{\partial^4_r [ (1-\lambda\,r^2)^6\, |T\tilde{\varphi}|^2_{dr^2+g_r,h} ]_{r=0}} \\  
 = & \ \frac{3}{4} \langle d(\delta d - \Se + 8\lambda)\,\delta T\varphi,T\varphi \rangle_{g,h} - \frac{3}{8}\langle \Se(\delta T\varphi),\delta T\varphi \rangle_h \\
   & \ + \frac{3}{8} |d\delta T\varphi|^2_{g,h} + 24\lambda \langle d\delta T\varphi,T\varphi \rangle_{g,h} + 144\lambda^2\,|T\varphi|^2_{g,h} \\ 
   & \ - \frac{3}{4} \langle (\delta d - \Se + 8\lambda) \delta T\varphi,\delta T\varphi \rangle_h - 9\lambda|\delta T\varphi|^2_h.
\end{align*}
On conclut en int\'egrant par partie.

\subsubsection{D\'emonstration du th\'eor\`eme \ref{theoreme-fonctionnelle-dimension6-id}}

Posons $\varphi := Id_M$, alors $\varphi^{(2)}=0$ d'apr\`es (\ref{phi''}), $\varphi^{(4)} = \frac{3}{20}\,d\scal$ d'apr\`es (\ref{equation_rec_derives_phi}) et (\ref{equation_P^2_g}), et avec (\ref{equation_H^g}), on obtient\,:
\begin{equation*}
 144\,H = \Big[(\nabla_{\!\textstyle{\partial_r\tilde{\varphi}}})^{4}\delta T\tilde{\varphi}\Big]_{r=0} + \Big[(\nabla_{\!\textstyle{\partial_r\tilde{\varphi}}})^{4}\delta^{g_r} T\varphi\Big]_{r=0} - 2\, (\tr g'')\,\varphi^{(4)}.
\end{equation*}
Le premier terme s'exprime facilement en termes de courbures de $(M,g)$\,:
\begin{equation}
 \label{dim6-id-H-1er-terme}
 \Big[(\nabla_{\!\textstyle{\partial_r\tilde{\varphi}}})^{4}\delta^g T\tilde{\varphi}\Big]_{r=0} = \frac{3}{20}(\Delta - \ric)\,d\scal.
\end{equation}
Comme $g'''' = \frac{3}{2}\, g''\!\circ g'' - 3\B$, le deuxi\`eme terme s'\'ecrit avec (\ref{equation_P^2_g})\,:
\begin{equation*}
 \Big[(\nabla_{\!\textstyle{\partial_r\tilde{\varphi}}})^4 \delta^{g_r} T\varphi\Big]_{r=0} = \frac{3}{2}\,\delta(g''\!\circ g'') + 3\,\delta\B + \frac{3}{4}\,d(\tr g''\!\circ g''),
\end{equation*}
or $g''= -\frac{1}{2}\ric + \frac{1}{20}\scal g$ et $\delta(\ric\!\circ\!\ric) = -\frac{1}{2}\, d\scal\circ\ric - \tr(\nabla_{\textstyle{\ric}} \ric)$, ainsi
\begin{align}
 \label{dim6-id-H-2eme-terme}
 \Big[(\nabla_{\!\textstyle{\partial_r\tilde{\varphi}}})^4 \delta^{g_r} T\varphi\Big]_{r=0}
 & =  - (\frac{9}{400}\scal g + \frac{9}{80}\ric) (d\scal) - \frac{3}{8}\,\tr(\nabla_{\textstyle{\ric}} \ric)\notag\\
 & \ \ \ \ \   + 3\,\delta\B + \frac{3}{16}\, d|\ric|^2.
\end{align}
Finalement, avec (\ref{dim6-id-H-1er-terme}) et (\ref{dim6-id-H-2eme-terme}), on obtient\,:
\begin{equation*}
 144\, H = \frac{3}{20}(\Delta + \frac{5}{20}\scal\,g - \frac{7}{4} \ric)\,d\scal - \frac{3}{8}\tr(\nabla_{\textstyle{\ric}} \ric)+ 3\,\delta\B + \frac{3}{16}\, d(|\ric|^2),
\end{equation*}
ce qui donne l'\'equation de C--harmonicit\'e annonc\'ee.

D'apr\`es l'\'egalit\'e (\ref{fonctionnelle_dimension_n}), on obtient en dimension $6$\,:
\begin{equation*}
  \mathcal{E}^6_g(id) = \frac{4}{3} \int_M \big[ \partial_r^4 \big(|T\tilde{\varphi}|^2_{dr^2+g_r,g} \,dvol_{g_r} \big)\big]_{r=0}.
 \end{equation*}
Comme les d\'eriv\'ees premi\`eres et troisi\`emes de $g_r$ et $\tilde{\varphi}$ par rapport \`a $r$ s'annulent pour $r=0$, on obtient pour le terme sous l'int\'egrale\,:
\begin{equation*}
 \big[\partial_r^4 (|T\tilde{\varphi}|^2_{dr^2+g_r,g})\big]_{r=0} \,dvol_g + 6\, \big[ \partial_r^2 (|T\tilde{\varphi}|^2_{g_r,g}) \partial_r^2(dvol_{g_r}) \big]_{r=0} + 6\, \big[\partial_r^4 (dvol_{g_r})\big]_{r=0}.
\end{equation*}
Le dernier terme est \'egale \`a $\big(3 \tr g'''' - 9 \tr g''\!\circ g'' + \frac{9}{2}(\tr g'')^2\big)\, dvol_g$ et comme $\tr g'''' = \frac{3}{2} \tr g''\!\circ g''$, on a bien $ \mathcal{E}^6_g(id) = \frac{2}{25} \int_M \scal^2 \,dvol_g$.

\section{Un exemple d'application C--harmonique non--trivial}

\subsection{Situation}

Nous avons d\'efini une nouvelle famille d'applications entre deux vari\'et\'es riemanniennes, la question qui se pose ici est de comparer cette nouvelle notion d'harmonicit\'e avec celle, d\'ej\`a pr\'eexistante en dimension sup\'erieure.

Nous avons d\'ej\`a vu, avec la proposition \ref{proposition_einstein_C-harmonique}, que si la vari\'et\'e de d\'epart est une vari\'et\'e d'Einstein de dimension paire, alors les applications harmoniques sont C--harmoniques. D'autre part, il existe aussi des applications harmoniques qui ne sont pas C--harmoniques, il suffit de prendre l'identit\'e d'une vari\'et\'e riemannienne de dimension $4$ munie d'une m\'etrique \`a courbure scalaire non--constante (voir le corollaire \ref{corollaire_id_dim4}). Comme l'harmonicit\'e n'est pas une notion qui est covariante conforme en dimension plus grande que $2$, contrairement \`a la C--harmonicit\'e, il existe beaucoup d'applications C--harmoniques, qui ne soient pas harmoniques.

La question naturelle qui se pose, est donc l'existence d'une application C--harmonique, qui ne soit pas simplement non--harmonique pour une m\'etrique particuli\`ere, mais qui ne soit pas harmonique pour toute la classe conforme consid\'er\'ee pour la C--harmonicit\'e.

\subsection{\'Enonc\'e}

Notre strat\'egie est de partir d'une vari\'et\'e $(M,h)$ de dimension $4$ \`a courbure scalaire constante strictement n\'egative, on a vu que dans ce cas l\`a, l'identit\'e est une application harmonique et C--harmonique (corollaire \ref{corollaire_id_dim4}). On suppose que $h$ est proche d'une m\'etrique d'Einstein, on va montrer que fixer la m\'etrique $h$ dans la vari\'et\'e d'arriv\'ee et d\'eformer judicieusement la m\'etrique de la vari\'et\'e de d\'epart, permet de construire une application proche de l'identit\'e qui conserve la C--harmonicit\'e, mais qui n'est plus harmonique, pour n'importe quel changement conforme petit ou grand de m\'etrique, par rapport \`a la vari\'et\'e de d\'epart.

On note $\mathcal{M}^{k,\alpha}$ l'espace des m\'etriques riemanniennes $C^{k,\alpha}$ de $M$, $\mathcal{A}^{k,\alpha}$ l'espace des applications $C^{k,\alpha}$ de $M$ dans $M$ et $\Gamma^{k,\alpha}$ l'espace des sections $C^{k,\alpha}$ de $\varphi^*TM$, ce sont des espaces de Banach. 

On peut maintenant \'enoncer notre r\'esultat d'existence d'une application C--harmonique non--triviale\,:

\begin{thrm}
 \label{theoreme_non_triviale}
 Soit $(M,g_e)$ une vari\'et\'e d'Einstein de dimension $4$ \`a courbure scalaire strictement n\'egative, alors il existe $\epsilon>0$ tel que, pour toute m\'etrique lisse $h$ v\'erifiant $\|h-g_e\|_{k+4,\alpha}<\epsilon$, il existe $\varphi$ une application $C^{\infty}$ de $M$ dans $M$ et $g$ une m\'etrique $C^{\infty}$, telles que\,:
 \begin{enumerate}
  \item $\|g-g_e\|_{k+4,\alpha}<\epsilon$,
  \item $\varphi$ est C--harmonique de $(M,[g])$ dans $(M,h)$,
  \item quelque soit $\omega$ dans $C^{k+4,\alpha}$, $\varphi$ n'est pas harmonique de $(M,e^{2\omega}g)$ dans $(M,h)$.
 \end{enumerate} 
\end{thrm}
 
La d\'emonstration du th\'eor\`eme se fait en quatre \'etapes.
\begin{enumerate}
 \item On prouve gr\^ace au th\'eor\`eme des fonctions implicites, que pour n'importe quelle m\'etrique $g$ suffisamment proche de $h$, il existe une unique application $\varphi(g)$ qui soit \`a la fois proche de l'identit\'e et C--harmonique de $(M,[g])$ dans $(M,h)$ (lemme \ref{lemme_def_phi}).
 \item Gr\^ace encore au th\'eor\`eme des fonctions implicites, on montre que pour n'importe quelle m\'etrique $g$ suffisamment proche de $h$, il existe un unique changement conforme $\omega(g)$ qui soit \`a la fois petit et qui soit solution d'une \'equation plus faible que l'harmonicit\'e de $\varphi(g)$ de $(M,e^{2\omega}g)$ dans $(M,h)$ (lemme \ref{lemme_def_omega}).
 \item On construit une telle m\'etrique $g$ de fa\c{c}on \`a ce que l'application $\varphi(g)$ ne soit pas harmonique de $(M,e^{2\omega}g)$ dans $(M,h)$, pour de petits changements conforme $\omega$.
 \item On montre finalement avec le lemme \ref{lemme_inf}, que cette application $\varphi(g)$ n'est pas non plus harmonique de $(M,e^{2\omega}g)$ dans $(M,h)$, pour de grands changements conforme $\omega$.
\end{enumerate}

\subsection{D\'emonstration du th\'eor\`eme \ref{theoreme_non_triviale}}

\subsubsection{D\'eformation de l'\'equation de conforme--harmonicit\'e}

Nous allons montrer comment obtenir des applications C--harmoniques qui sont proches de l'identit\'e, quand on se place suffisamment pr\`es d'une m\'etrique d'Einstein \`a courbure scalaire n\'egative.

Soient $g$ et $h$ deux m\'etriques dans $\mathcal{M}^{k+3,\alpha}$ et $\varphi$ dans $\mathcal{A}^{k+4,\alpha}$, on note\,:
\begin{equation*}
 \Pa^4(\varphi,g,h) := \delta d\delta T\varphi + \delta (\frac{2}{3}\scal^g - 2\ric^g) T\varphi - \Se^h(\delta T\varphi),
\end{equation*}
ainsi $\Pa^4(\varphi,g,h)=0$ est la condition de C--harmonicit\'e de $\varphi$ entre $(M,[g])$ et $(M,h)$ et $\Pa^4$ s'interpr\`ete comme une g\'en\'eralisation de l'op\'erateur de Paneitz aux applications de $(M,g)$ dans $(N,h)$.

On veut construire des applications C--harmoniques proches de l'identit\'e en faisant varier la m\'etrique $g$. Le lemme suivant nous donne l'existence d'un op\'erateur qui permet d'associer \`a chaque m\'etrique $g$ proche d'une m\'etrique $h$ particuli\`ere, l'unique application qui va \^etre \`a la fois proche de l'identit\'e et C--harmonique. Pour all\'eger les notations, on notera $\varphi$ cet op\'erateur.
  
\begin{lemme}
 \label{lemme_def_phi}
 On se donne $g_e$ une m\'etrique d'Einstein \`a courbure scalaire strictement n\'egative, alors le probl\`eme implicite 
 \begin{equation*}
  \Pa^4(\varphi(g),g,h)=0
 \end{equation*}
 admet une unique solution locale.

 Il existe un voisinage $V_e$ de $g_e$ dans $\mathcal{M}^{k+3,\alpha}$, un voisinage $V_{id}$ de $id_M$ dans $\mathcal{A}^{k+4,\alpha}$, un op\'erateur $\varphi$ continue qui de $V_e$ dans $V_{id}$ qui d\'epend continument de $h$, et tel que pour tout $(\psi,g,h) \in V_{id} \times V_e^2$, on a $\Pa^4(\varphi(g),g,h)=0$ si et seulement si $\psi = \varphi(g)$.
\end{lemme}

\begin{proof}   
 L'op\'erateur $\Pa^4$ est continue de $\mathcal{A}^{k+4,\alpha}(M) \times \big(\mathcal{M}^{k+3,\alpha}(M)\big)^2$ dans $\Gamma^{k,\alpha}(M)$ et s'annule en $(id,g_e,g_e)$. On obtient avec (\ref{equation_P^4_phi}) pour sa diff\'erentielle par rapport aux applications au point $(id,g_e,g_e)$ dans la direction $\dot{\varphi}$\,:
 \begin{equation}
  \label{dP4(phi)}
  \frac{\partial \Pa^4}{\partial \varphi}(\dot{\varphi}) = (\delta d - \frac{\scal^e}{12}) (\delta d -\frac{\scal^e}{4}) \,\dot{\varphi},
 \end{equation}
 o\`u $\scal^e$ est la courbure scalaire de $g_e$. Comme celle--ci elle strictement n\'egative, alors $\frac{\partial \Pa^4}{\partial \varphi}$ est inversible et on applique le th\'eor\`eme des fonctions implicites.
\end{proof}

\subsubsection{Contr\^ole local du changement conforme}

Nous allons montrer qu'on contr\^ole le seul changement conforme local, qui puisse rendre harmonique notre application C--harmonique pr\'ec\'edemment construite.

Soient $\varphi$ une application $\mathcal{A}^{k+4,\alpha}$ et $\omega$ une fonction $C^{k+3,\alpha}$, on d\'esigne par $\Pa^2(\omega,\varphi,g,h)$ le laplacien de $\varphi$ de $(M,e^{2\omega}g)$ dans $(M,h)$, on obtient facilement avec l'\'egalit\'e de Bianchi\,:
\begin{equation*}
 \Pa^4(\varphi,g,h) = (\delta d + \frac{2}{3}\scal^g - \Se^h) \Pa^2(0,\varphi,g,h) + \frac{1}{3} \langle d\scal^g,T\varphi \rangle + 2 \langle \ric^g,\nabla T\varphi \rangle.
\end{equation*}
Pour contr\^oler localement le changement conforme, on utilise encore une fois le th\'eor\`eme des fonctions implicites, mais si on l'applique directement \`a $\Pa^4$, la courbure de $M$ va \^etre incompatible avec les hypoth\`eses du lemme \ref{lemme_def_phi}. On ajoute alors un terme correctif $Q$ \`a $\Pa^4$ qui va nous permettre d'utiliser le th\'eor\`eme des fonctions implicites avec les bonnes conditions de courbures. Soit $\overline{g}=e^{2\omega}g$, on pose\,:
\begin{align*}
 Q(\omega,\varphi,g,h) &:= \Pa^4(\varphi,\overline{g},h) - (\delta^{\overline{g}} d - \Se^h) \Pa^2(\omega,\varphi,g,h) \\
                       & =  \frac{2}{3} \scal^{\overline{g}} \Pa^2(\omega,\varphi,g,h) + \frac{1}{3} \langle d\scal^{\overline{g}},T\varphi \rangle_{\overline{g}} + 2 \langle \ric^{\overline{g}},\nabla^{\overline{g}} T\varphi \rangle_{\overline{g}},
\end{align*}
ainsi $Q$ est un op\'erateur de degr\'e $3$ en $\omega$, d'ordre $2$ en $\varphi$, d'ordre $3$ en $g$ et d'ordre $1$ en $h$. Le fait qu'une application $\varphi$ soit harmonique de $(M,\overline{g})$ dans $(M,h)$ et C--harmonique de $(M,[g])$ dans $(M,h)$ est donc \'equivalent au syst\`eme suivant\,:
\begin{center}
 $\left\{\begin{array}{ll}
  Q(\omega,\varphi,g,h)      &= 0, \\
  \Pa^2(\omega,\varphi,g,h)  &= 0.
 \end{array}\right.$
\end{center}

Soit $g$ et $h$ deux m\'etriques dans $V_e$, on va s'int\'eresser aux changements conformes $\omega$ qui v\'erifient la sous--condition d'harmonicit\'e suivante\,:
 \begin{equation}
  \label{systeme faible}
  \delta \Pa^2(\omega,\varphi(g),g,h) = 0,
 \end{equation}
o\`u $\delta$ d\'esigne la divergence par rapport \`a $g$. Le lemme suivant nous donne l'existence d'un op\'erateur qui permet d'associer pour chaque m\'etrique $g$ proche de $h$, l'unique changement conforme \`a une constante pr\`es, qui va \^etre \`a la fois proche de $0$ et solution de (\ref{systeme faible}). On peut remarquer que l'on aurait pu travailler avec la condition $\delta Q(\omega,\varphi(g),g,h) = 0$, qui est aussi naturelle et qui donne les m\^emes r\'esultats. On note $\omega$ cet op\'erateur, qui contr\^ole donc localement le changement conforme, dans le sens o\`u si l'application $\varphi(g)$ est harmonique pour un petit changement conforme de $g$, alors n\'ecessairement ce changement conforme sera \'egale \`a $\omega(g)$.
\begin{lemme}
 \label{lemme_def_omega}
 On se donne $g_e$ une m\'etrique d'Einstein \`a courbure scalaire strictement n\'egative, alors le probl\`eme implicite
 \begin{equation*}
  \delta \Pa^2(\omega(g),\varphi(g),g,h)=0
 \end{equation*}
 admet une unique solution locale.

 Il existe un voisinage $V'_e \subset V_e$ de $g_e$ dans $\mathcal{M}^{k+4,\alpha}$, un voisinage $V_0$ de la fonction nulle dans $C^{k+4,\alpha}$, un op\'erateur $\omega$ continue de $V'_e$ dans $V_0$ qui d\'epend continument de $h$, et tel que pour tout $(\Omega,g,h) \in V_0 \times (V'_e)^2$ avec $\int_M \Omega = 0$, on a $\delta \Pa^2\big(\Omega,\varphi(g),g,h)=0$ si et seulement si $\Omega = \omega(g)$.
\end{lemme}

\begin{proof}
 Comme $\delta \Pa^2\big(\omega + cste,\varphi,g,h) = 0 $ est \'equivalent \`a $ \delta P^2\big(\omega,\varphi,g,h) = 0$, le contr\^ole de $\omega$ se fait \`a une constante pr\`es, qu'on fixe en imposant que l'int\'egrale du changement conforme soit nulle sur $M$ par rapport \`a $g$. Au point $(0,id,g_e,g_e)$, l'op\'erateur $\delta \Pa^2$ s'annule et sa diff\'erentielle par rapport aux changements conformes qui sont d'int\'egrale nulle sur $M$ par rapport \`a $g_e$ est inversible. En effet, on obtient dans la direction $\dot{\omega}$ et au point $(0,id,g_e,g_e)$\,:
 \begin{equation*}
  \frac{\partial\delta \Pa^2}{\partial \omega} (\dot{\omega}) = - 2\, \Delta_e\dot{\omega}.
 \end{equation*}
 On conclut en appliquant le th\'eor\`eme des fonctions implicites.
\end{proof}

Le seul changement conforme local qui puisse rendre notre application $\varphi(g)$ harmonique est maintenant contr\^ol\'e par notre application $\omega$.

\subsubsection{Construction de notre contre--exemple}

Nous savons maintenant contr\^oler le seul changement conforme local qui pourrait rendre harmonique notre application C--harmonique pr\'ec\'edemment construite en d\'eformant la m\'etrique de d\'epart. On va montrer ici que les \'equations sont trop rigides, c'est--\`a--dire qu'il existe au moins une d\'eformation pour laquelle l'application et le changement conforme qui lui sont associ\'es ne v\'erifient pas la condition d'harmonicit\'e.

Soit $\tilde{Q}$ l'op\'erateur d\'efini de $\mathcal{M}^{k+4}$ dans $\Gamma^{k+1,\alpha}$ de la fa\c{c}on suivante,
\begin{equation}
 \tilde{Q}(g):=Q(\omega(g),\varphi(g),g,h),
\end{equation}
o\`u $g$ est une m\'etrique de $V'_e$, on se donne $\dot{g}$ dans $S^2TM$ et $(g_t)_{t\in[0,1]}$ une famille de m\'etriques de $W$ v\'erifiant le syst\`eme suivant\,:
 \begin{center}
 $\left\{\begin{array}{ll}
  g_0 & = h \\ \big[\partial_t g_t\big]_{t=0} & = \dot{g},
  \end{array}
 \right.$
\end{center}
nous allons montrer que si l'on choisit bien $\dot{g}$, il existe $s$ dans $[0,1]$ tel que la m\'etrique $g_s$ n'annule pas $\tilde{Q}$. Pour cela, on va calculer la diff\'erentielle ext\'erieure de la variation infinit\'esimale de $\tilde{Q}$ dans la direction $\dot{g}$ au point $(0,id,h,h)$ et montrer que celle--ci est non--nulle. Commen\c{c}ons par calculer la variation infinit\'esimale de $Q$ au point $(0,id,h,h)$, on obtient dans la direction $\dot{\omega}$\,:
\begin{equation*}
 \frac{\partial Q}{\partial \omega}(\dot{\omega}) = 2\, d\Delta\dot{\omega} - 4\ric d\dot{\omega},
\end{equation*}
dans la direction $\dot{\varphi}$ avec les formules (\ref{equation_P^2_phi}) et (\ref{equation_nablaT_phi})\,:
\begin{equation*}
 \frac{\partial Q}{\partial \varphi}(\dot{\varphi}) = \frac{2}{3} \scal (\delta d - \ric)\, \dot{\varphi} + 2\, \langle \ric,\nabla d\dot{\varphi} + \R_{\textstyle{\dot{\varphi},.}}. \rangle,
\end{equation*}
 et dans la direction $\dot{g}$ avec les formules (\ref{equation_P^2_g}), (\ref{equation_nablaT_g}) et \cite[1.174.e)]{MR867684}\,:
\begin{equation*}
 \frac{\partial Q}{\partial g}(\dot{g})
  = - \frac{1}{3} \scal (2\, \delta\dot{g} + d\tr\dot{g}) + \frac{1}{3}\, \big(d\Delta(\tr\dot{g}) + d\delta\delta\dot{g} - d\langle\ric, \dot{g}\rangle\big) - \langle \ric, 2\, \delta^*\!\dot{g} - \nabla \dot{g} \rangle,
\end{equation*}
o\`u les termes $\scal,\ric,\delta,\Delta$ ainsi que les traces et les produits scalaires sont donn\'es par rapport \`a $h$, et $\delta^*$ d\'esigne l'adjoint de la divergence $\delta$. Ce qui donne pour la variation infinit\'esimale de $\tilde{Q}$ au point $h$ dans la direction $\dot{g}$\,:
\begin{align*}
 T_h\tilde{Q}(\dot{g}) =
 & 2\, d\Delta T\omega(\dot{g}) - 4 \ric d\,T\omega(\dot{g}) + \frac{2}{3} \scal\, (\delta d - \Se)\, T\varphi(\dot{g}) \\
 & + 2\, \big\langle \ric,\nabla d\,T\varphi(\dot{g}) + \R_{\textstyle{\,T\varphi(\dot{g}),.}}. \big\rangle - \frac{1}{3} \scal\, (2\, \delta\dot{g} + d\tr\dot{g}) \\
 & + \frac{1}{3}\, \big(d\Delta(\tr\dot{g}) + d\delta\delta\dot{g} - d\langle\ric, \dot{g}\rangle\big) - \langle \ric, 2\,\delta^*\!\dot{g} - \nabla\dot{g} \rangle,
\end{align*}
comme $T\omega$ et $T\varphi$ sont respectivement d'ordre $0$ et $-1$ en $\dot{g}$, alors $T\tilde{Q}$ est donc d'ordre $3$ en $\dot{g}$. On calcule sa diff\'erentielle ext\'erieure, qu'on note $d\,T\tilde{Q}(\dot{g})$, pour supprimer les termes d'ordre $2$ et $3$, il ne restera que les termes d'ordre $1$ (qui seront des termes d'ordre $2$ dans la diff\'erentielle ext\'erieure)\,:
\begin{equation}
 \label{symbol_dTQ}
 d\,T\tilde{Q}(\dot{g}) = A(\dot{g}) + B(\dot{g}) + C(\dot{g}) +\, \mbox{des termes d'ordre inf\'erieurs},
\end{equation}
avec
\begin{align*}
 A(\dot{g}) &= -\, 4\, d\big(\ric d\,T\omega(\dot{g})\big), \\
 B(\dot{g}) &= d\langle \ric, 2\nabla d\,T\varphi(\dot{g}) - 2\, \delta^* \dot{g} + \nabla\dot{g} \rangle, \\
 C(\dot{g}) &= \frac{2}{3} \scal d\big(\delta d \, T\varphi(\dot{g}) - \delta\dot{g}\big).
\end{align*}
On va prouver que $d\,T\tilde{Q}$ n'est pas trivialement nul en montrant que son symbole n'est pas nul dans une certaine direction. Pour cela, on calcule le symbole de $T\varphi$ et de $T\omega$, on obtient au point $(id,h,h)$ avec les formules (\ref{equation_P^4_phi}) et (\ref{equation_P^4_g})\,:
\begin{equation*}
 \sigma_{T\varphi}(X) = \frac{1}{|X|^2}\, \dot{g}(X)-\frac{\tr \dot{g}}{6|X|^2}\, X - \frac{\dot{g}(X,X)}{3|X|^4}\, X
\end{equation*}
et ensuite au point $(0,id,h,h)$\,:
\begin{equation*}
 \sigma_{T\omega}(X) = - \frac{1}{6} \big(\tr\dot{g} - \frac{\dot{g}(X,X)}{|X|^2}\big).
\end{equation*}
On remarque que les symboles de $A$ et $B$ sont d'ordre $2$ si la m\'etrique $h$ n'est pas Einstein et que celui de $C$ est d'ordre inf\'erieur, plus pr\'ecis\'ement on a\,:
\begin{align*}
 \sigma_A (X) &= \frac{2}{3} \big(\tr\dot{g} - \frac{\dot{g}(X,X)}{|X|^2}\big)\, X \wedge \ric(X), \\
 \sigma_B (X) &= \frac{2\ric(X,X)}{|X|^2}\, X \wedge \dot{g}(X) - 2\, X \wedge \dot{g}(\ric(X)).
\end{align*}
Comme la m\'etrique $h$ ne satisfait pas la condition d'Einstein, il existe $Y$, $Z$ dans $TM$ et $\alpha$ un nombre r\'eel tels que $|Y|^2_h = 1$, $\ric(Y) = \alpha \, Y + Z$ et $h(Y,Z) = 0$.
Soit $\dot{g}$ une d\'eformation qui v\'erifie $\tr\dot{g} = 0$, $\dot{g}(Y) = Y$ et $\dot{g}(Z) = Z$, alors le symbole de $d\,T\tilde{Q}$ est non nul, puisque
\begin{equation*}
  \sigma_{d\,T\tilde{Q}}(Y) = \sigma_A (Y) + \sigma_B (Y) = -\frac{8}{3}\, Y \wedge Z .
\end{equation*}
Cela implique que $T\tilde{Q}$ n'est pas nul dans la direction $\dot{g}$, or $\tilde{Q}(g_0)=\tilde{Q}(h)=0$, donc il existe $g_s$ telle que $\tilde{Q}(g_s)$ ne soit pas nul, ce qui prouve que $\varphi(g_s)$ n'est pas harmonique pour aucun petit changement conforme \`a $g_s$.

\subsubsection{Le changement conforme est local}

Supposons que $\varphi(g)$ est harmonique de $(M,e^{2f}g)$ dans $(M,h)$, alors on va montrer que si $g$ est suffisamment proche de $h$, alors on peut supposer que $f$ est petit.

\begin{lemme}
 \label{lemme_inf}
 Quelque soit $\lambda>0$, il existe un r\'eel $\mu$ strictement positif qui v\'erifie la propri\'et\'e suivante; quelque soit la m\'etrique $g$ v\'erifiant $\|g-h\|_{k+4,\alpha} < \mu$, alors s'il existe une fonction $f$ de classe $C^{k+4,\alpha}$ qui satisfait
 \begin{equation*}
  \Pa^2(\varphi(g),e^{2f} g,h) = 0,
 \end{equation*}
 alors il existe une fonction $\omega$ de classe $C^{k+4,\alpha}$ qui satisfait
 \begin{center}
  $\Pa^2(\varphi(g),e^{2\omega} g,h) = 0$ et $\|\omega\|_{k+4,\alpha} < \lambda$.
 \end{center}
\end{lemme}

\begin{proof}
 Soit $\lambda>0$ et $f$ une fonction $C^{k+4,\alpha}$, par continuit\'e il existe $\mu>0$ tel que pour toute m\'etrique $g$ v\'erifiant $\|g-h\|_{k+4,\alpha}<\mu$, alors
\begin{align*}
  \|df\|_{k+3,\alpha} - \|\langle df,T\varphi(g) \rangle_g\|_{k+3,\alpha} &\leq \frac{\lambda}{2\, \big(diam(M)+1\big)} \\
  \|\Pa^2(\varphi(g),g,h)\|_{k+3,\alpha}                         	  &\leq \frac{\lambda}{diam(M)+1},
 \end{align*}
 Si $f$ v\'erifie la condition du lemme alors $\Pa^2(\varphi(g),g,h) = 2\,\langle df,T\varphi(g) \rangle_g$, et on obtient avec ce qui pr\'ec\`ede\,:
 \begin{equation*}
  \label{e3}
  \|df\|_{k+3,\alpha} \leq \frac{\lambda}{diam(M)+1}.
 \end{equation*}
 Fixons $x_0$ un point de $M$ et posons $\omega:=f-f(x_0)$, d'apr\`es les in\'egalit\'es des accroissements finis, on a\,:
 \begin{equation*}
  \|\omega\|_{\infty} \leq diam(M)\, \|d\omega\|_{k+3,\alpha}.
 \end{equation*}
 D'autre part, $\|\omega\|_{k+4,\alpha} = \|\omega\|_{\infty} + \|d\omega\|_{k+3,\alpha}$, ainsi on obtient
 \begin{equation*}
  \|\omega\|_{k+4,\alpha} \leq \big(1+diam(M)\big)\, \|d\omega\|_{k+3,\alpha},
 \end{equation*}
 ce qui cl\^ot la preuve, vu que $df=d\omega$ et que $\Pa^2(\varphi(g),e^{2\omega} g,h) = 0$.
\end{proof}

\subsubsection{Conclusion}

On reprend les notations du lemme \ref{lemme_def_omega}. On a un voisinage $V'_e$ de $g_e$ dans $\mathcal{M}^{k+4,\alpha}$ et un voisinage $V_0$ de la fonction nulle dans $C^{k+4,\alpha}$. Il existe alors deux nombres r\'eels $\lambda$ et $R$, strictement positifs, et tels que la boule $B_{\lambda}$ dans $C^{k+4,\alpha}$ de centre la fonction nulle et de rayon $\lambda$ et la boule $B_R$ dans $\mathcal{M}^{k+4,\alpha}$ de centre $g_e$ et de rayon $R$ v\'erifient $\omega(B_R) \subset B_{\lambda} \subset V_0$.
Soit $\varepsilon < R$ un nombre r\'eel strictement positif et $h$ une m\'etrique fix\'ee de $V'_e$, non Einstein, de courbure scalaire constante \'egale \`a celle de $g_e$ et vérifiant
\begin{equation*}
 \|h-g_e\|_{k+4,\alpha} < \frac{\varepsilon}{2}.
\end{equation*}

On se donne une m\'etrique $g$ qui v\'erifie $\|g-h\|_{k+4,\alpha} < \min(\mu,\epsilon /2)$, o\`u $\mu$ est d\'efini par le lemme \ref{lemme_inf} et on suppose qu'il existe un changement conforme $f$ tel que l'application $\varphi(g)$ est harmonique de $(M,e^{2f}g)$ dans $(N,h)$. La m\'etrique $g$ v\'erifie
\begin{equation*}
 \|g-g_e\|_{k+4,\alpha} < \|g-h\|_{k+4,\alpha} + \|h-g_e\|_{k+4,\alpha} < \varepsilon
\end{equation*}
et comme $\|g-h\|_{k+4,\alpha} < \mu$, alors le changement conforme $\omega$ du lemme \ref{lemme_inf} est dans $B_{\lambda}$. De plus la m\'etrique $g$ est dans $B_R$, alors $\omega=\omega(g)$ \`a une constante pr\`es d'apr\`es le lemme \ref{lemme_def_omega}. On vient donc de montrer que pour de telles m\'etriques $g$, le probl\`eme global se r\'esume au probl\`eme local.

\section{Formules de variations au premier ordre} 

Dans la suite on utilisera la convention suivante; on va indicer par $h$ les termes qui se r\'ef\`erent \`a $h$ et ne rien mettre pour ceux qui se r\'ef\`erent \`a $g$. 
\begin{proposition}
 Avec les notations pr\'ec\'edentes, on obtient pour les d\'eformations infinit\'esimales par rapport aux applications au point $(0,\varphi,g,h)$\,:
 \begin{align}
  \label{equation_nablaT_phi}
  \frac{\partial \nabla T}{\partial \varphi} (\dot{\varphi}) &= \nabla d \dot{\varphi} + \R^h_{\,\textstyle{\dot{\varphi},T\varphi}} T\varphi \\
  \label{equation_P^2_phi}
  \frac{\partial \Pa^2}{\partial \varphi} (\dot{\varphi})    &= (\delta d - \Se^h) (\dot{\varphi}),
 \end{align}
 et par rapport \`a la m\'etrique de d\'epart au point $(0,\varphi,g,h)$\,:
 \begin{align}
  \label{equation_nablaT_g}
  \frac{\partial \nabla T\varphi}{\partial g} (\dot{g}) &= - \langle T\varphi , \delta^*\dot{g} - \frac{1}{2}\, \nabla \dot{g} \rangle \\
  \label{equation_P^2_g}
  \frac{\partial \Pa^2}{\partial g} (\dot{g})           &= - \delta\big(\dot{g}\, (T\varphi)\big) - \frac{1}{2}\, \langle d\tr\dot{g},T\varphi \rangle,
 \end{align}
 o\`u $\delta^*\dot{g}$ est d\'efini de la mani\`ere suivante\,:
 \begin{equation*}
  \delta^*\dot{g}(X,Y,Z) := \frac{1}{2}\, \big(\nabla_{\textstyle{X}} \dot{g} (Y,Z) + \nabla_{\textstyle{Y}} \dot{g} (X,Z)\big).
 \end{equation*}
\end{proposition}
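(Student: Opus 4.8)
The plan is to read all four identities as ordinary first--variation computations: for each, realise the prescribed infinitesimal deformation by a one--parameter family, differentiate the relevant operator at $t=0$, and organise the resulting terms into the stated Hessian and curvature contributions. Throughout I would use that, at $\omega=0$, $\Pa^2(0,\varphi,g,h)=\delta^gT\varphi=-\tr_g\nabla T\varphi$, where $\nabla T\varphi$ denotes the section of $\Omega^1(M)\otimes\varphi^*TN$ given by $(\nabla_XT\varphi)(Y)=\nabla^h_{T\varphi(X)}T\varphi(Y)-T\varphi(\nabla^g_XY)$.

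For the two variations with respect to $\varphi$ I would reuse the setup already employed in the proof of Theorem~\ref{theoreme_fonctionnelle_paire}: given $\dot\varphi\in\Gamma(\varphi^*TN)$, choose a family $\varphi_t$ with $\varphi_0=\varphi$ and $[\partial_t\varphi_t]_{t=0}=\dot\varphi$, form $\varPhi(p,t):=\varphi_t(p)$ on $M\times(-\epsilon,\epsilon)$, and exploit the symmetry of $\nabla^{\gamma,h}T\varPhi$. Differentiating $(\nabla T\varphi_t)(X,Y)$ at $t=0$ then reduces to commuting $\nabla^h_{\partial_t\varPhi}$ past two spatial covariant derivatives on $\varPhi^*TN$; the commutator is $\R^h_{\partial_t\varPhi,\,T\varPhi(X)}T\varPhi(Y)$, while the purely spatial term becomes $\nabla^2_{X,Y}\dot\varphi$. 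Evaluating at $t=0$ gives $\nabla d\dot\varphi+\R^h_{\dot\varphi,T\varphi}T\varphi$, which is (\ref{equation_nablaT_phi}). Formula~(\ref{equation_P^2_phi}) then follows by applying $-\tr_g$, since the domain trace does not see the deformation of $\varphi$: one gets $-\tr_g\nabla d\dot\varphi=\delta d\dot\varphi$ and $-\tr_g(\R^h_{\dot\varphi,T\varphi}T\varphi)=-\sum_i\R^h_{\dot\varphi,T\varphi(e_i)}T\varphi(e_i)=-\Se^h(\dot\varphi)$, by the very definition of $\Se^h$.

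For the variations with respect to the domain metric, the key input is the classical first--variation formula for the Levi--Civita connection (see~\cite[1.174.e)]{MR867684}): for a family $g_t$ with $\dot g=[\partial_tg_t]_{t=0}$ the Christoffel tensor varies by the symmetric $TM$--valued $2$--form $S$ characterised by $g(S(X,Y),Z)=\delta^*\dot g(X,Y,Z)-\tfrac12(\nabla_Z\dot g)(X,Y)$. Since $(\nabla_XT\varphi)(Y)$ depends on $g$ only through the term $T\varphi(\nabla^g_XY)$, differentiating yields $[\tfrac{\partial\nabla T\varphi}{\partial g}(\dot g)](X,Y)=-T\varphi(S(X,Y))$, which is (\ref{equation_nablaT_g}) once $S$ is rewritten via $\delta^*\dot g$ and $\nabla\dot g$. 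For (\ref{equation_P^2_g}) I would differentiate $\Pa^2=-\tr_g\nabla T\varphi$ by the Leibniz rule, working in a local orthonormal frame $(e_i)$ parallel at the base point: the variation of $g^{-1}$ inside $\tr_g$ contributes $\langle\dot g,\nabla T\varphi\rangle$, while the variation of $\nabla T\varphi$ contributes $\sum_iT\varphi(S(e_i,e_i))=-\langle\delta\dot g,T\varphi\rangle-\tfrac12\langle d\tr\dot g,T\varphi\rangle$, using $\sum_i(\nabla_Z\dot g)(e_i,e_i)=d\tr\dot g(Z)$ and the convention for the divergence $\delta$ of a symmetric $2$--tensor. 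Finally the Leibniz identity $\delta(\dot g(T\varphi))=\langle\delta\dot g,T\varphi\rangle-\langle\dot g,\nabla T\varphi\rangle$ repackages the sum of these two contributions as $-\delta(\dot g(T\varphi))-\tfrac12\langle d\tr\dot g,T\varphi\rangle$, which is~(\ref{equation_P^2_g}).

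The commutation of covariant derivatives and the raising and lowering of indices are routine; the one genuinely delicate point is~(\ref{equation_P^2_g}), where three distinct effects — the variation of the metric in the contraction $\tr_g$, the variation of the connection hidden inside $\nabla T\varphi$, and the regrouping of the leftover first--order terms into divergence form — have to be carried in step. Performing the whole computation in a frame that is parallel at the chosen point, so that all $\nabla e_i$ terms drop out, is precisely what makes the three pieces line up, and is the step I would treat with the most care.
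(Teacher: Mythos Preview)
Your argument is correct and follows the same route as the paper's own proof: the $\varphi$--variations are handled via the auxiliary map $\varPhi$ on $M\times[0,1]$ and the curvature commutator, and the $g$--variations via the first variation of the Levi--Civita connection from Besse, with (\ref{equation_P^2_g}) obtained by tracing. You supply considerably more detail than the paper, which simply states that the last two identities follow from Besse's formula; one small point is that the connection variation you quote is \cite[1.174.a)]{MR867684} rather than 1.174.e), though the formula you write down is the right one.
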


\begin{proof}
 On note $(\varphi_t)_{t\in[0,1]}$ une famille \`a $1$--param\`etre d'applications de $M$ dans $N$, v\'erifiant le syst\`eme suivant\,:
  \begin{center}
  $\left\{
   \begin{array}{ll}
   \varphi_0 & = \varphi \\
   \big[\partial_t\varphi_t\big]_{t=0} & = \dot{\varphi}.
   \end{array}
  \right.$
 \end{center}
 On munit $M\times[0,1]$ de la m\'etrique $\gamma=g + dt^2$ et on pose $\varPhi$ l'application de $M\times[0,1]$ dans $N$ d\'efinie par $\varPhi(p,t)=\varphi_t(p), \forall (p,t)\in M\times[0,1]$. On note $\nabla^{\gamma,h}$ la connexion de Levi--Civita du fibr\'e $\Omega(M)\otimes\varPhi^*TN$ et on se donne deux vecteurs $X$ et $Y$ de $TM$, alors\,:
\begin{align*}
 \lefteqn{\nabla^h_{\!\textstyle{\partial_t\varphi_t}} \big((\nabla^{\gamma,h}_{\textstyle{X}} T\varphi_t) \,Y\big)} \\
  &= \nabla^h_{\textstyle{T\varphi_t(X)}} \big((\nabla^{\gamma,h}_{\textstyle{\partial_t}} T\varphi_t) \,Y\big) + \R^h_{\,\textstyle{\partial_t\varphi_t,T\varphi_t(X)}} T\varphi_t(Y) - \nabla^h_{\,\textstyle{\partial_t\varphi_t}} \big(T\varphi_t(\nabla^g_{\,\textstyle{X}} Y)\big)\\
  &= \nabla^h_{\textstyle{T\varphi_t(X)}} \big((\nabla^{\gamma,h}_{\textstyle{Y}} T\varphi_t) \partial_t\big) + \R^h_{\,\textstyle{\partial_t\varphi_t,T\varphi_t(X)}} T\varphi_t(Y) - \nabla^h_{\,\textstyle{\partial_t\varphi_t}} \big(T\varphi_t(\nabla^g_{\,\textstyle{X}} Y)\big),
\end{align*}
 ce qui donne bien les deux premi\`eres \'egalit\'es. Les deux derni\`eres se montrent au moyen de la formule \cite[1.174.a)]{MR867684}.
\end{proof}

\begin{proposition}
Avec les notations pr\'ec\'edentes, $n=4$ et en supposant que la courbure scalaire de $h$ est constante, on obtient au point $(id,h,h)$\,:
 \begin{align}
 \label{equation_P^4_phi}
 \frac{\partial \Pa^4}{\partial \varphi} (\dot{\varphi})
  =& (\delta d+\frac{2}{3}\scal-\ric)\, (\delta d-\ric)\, \dot{\varphi} + 2\, \langle \ric,\nabla d\dot{\varphi} + \R_{\textstyle{\dot{\varphi},T\varphi}}T\varphi \rangle \\
 \label{equation_P^4_g}
 \frac{\partial \Pa^4}{\partial g} (\dot{g})
  =& - (\delta d+\frac{2}{3}\scal-\ric) (\delta\dot{g} + \frac{1}{2}\, d\tr\dot{g}) + \frac{1}{3}\, (d\,\Delta\tr\dot{g} + d\delta\delta\dot{g}) \notag\\
  &  - \frac{1}{3}\, d\langle \ric, \dot{g} \rangle - \langle \ric, 2\, \delta^*\dot{g} - \nabla\dot{g} \rangle.
 \end{align}
\end{proposition}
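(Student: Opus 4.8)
The plan is to derive both formulas from the factorisation of $\Pa^4$ obtained earlier (from the definition of $\Pa^4$ and the contracted Bianchi identity), valid in dimension $4$ for maps from $(M,g)$ to $(M,h)$:
\begin{equation*}
 \Pa^4(\varphi,g,h) = (\delta d + \tfrac{2}{3}\scal^g - \Se^h)\,\Pa^2(0,\varphi,g,h) + \tfrac{1}{3}\langle d\scal^g, T\varphi\rangle + 2\langle \ric^g, \nabla^h T\varphi\rangle,
\end{equation*}
and to differentiate the three summands separately at the point $(id,h,h)$, where $\scal^h$ is constant. The observation that makes the computation short is that three quantities vanish simultaneously there: $\Pa^2(0,id,h,h) = \delta^h T(id) = 0$ (the identity is harmonic), $\nabla^h T(id)=0$ (the identity of $(M,h)$ into itself is totally geodesic), and $d\scal^h = 0$. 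Consequently, in each Leibniz expansion every term in which one differentiates the outer operator $\delta d + \tfrac{2}{3}\scal^g - \Se^h$, or $\ric^g$, or the metric inside a pairing $\langle\cdot,\cdot\rangle$, or $\scal^g$ in the middle summand, is multiplied by one of these three vanishing quantities and drops out; this uses that those variation terms are pointwise linear in $\Pa^2$ or in $\nabla^h T\varphi$, no derivatives of them occurring.

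First I would treat the $\varphi$-derivative. From the first summand only $(\delta d + \tfrac{2}{3}\scal - \Se^h)\big|_{(id,h,h)}$ applied to $\frac{\partial\Pa^2}{\partial\varphi}(\dot\varphi)$ survives; I substitute (\ref{equation_P^2_phi}), note that at $\varphi=id$ one has $T\varphi = id$ (so the contraction is trivial) and that $\Se^h$ becomes the Ricci endomorphism $\ric$, and obtain $(\delta d+\tfrac{2}{3}\scal-\ric)(\delta d - \ric)\dot\varphi$. The middle summand contributes nothing, since $\scal^g$ is $\varphi$-independent and it is paired against $d\scal^h=0$. From the third summand only $2\langle\ric,\frac{\partial(\nabla T\varphi)}{\partial\varphi}(\dot\varphi)\rangle$ remains, and inserting (\ref{equation_nablaT_phi}) gives $2\langle\ric,\nabla d\dot\varphi + \R^h_{\dot\varphi,T\varphi}T\varphi\rangle$; adding the two surviving pieces is exactly (\ref{equation_P^4_phi}). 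The $g$-derivative is handled the same way. From the first summand, $(\delta d+\tfrac{2}{3}\scal-\ric)$ applied to $\frac{\partial\Pa^2}{\partial g}(\dot g)$, into which I put (\ref{equation_P^2_g}) and contract $T\varphi = id$, gives $-(\delta d+\tfrac{2}{3}\scal-\ric)(\delta\dot g + \tfrac12 d\tr\dot g)$. From the middle summand, $\tfrac13\,d$ applied to the classical linearisation $\frac{\partial\scal}{\partial g}(\dot g) = \Delta\tr\dot g + \delta\delta\dot g - \langle\ric,\dot g\rangle$ of \cite[1.174.e)]{MR867684}, i.e. $\tfrac13(d\Delta\tr\dot g + d\delta\delta\dot g - d\langle\ric,\dot g\rangle)$. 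From the third summand, $2\langle\ric,\frac{\partial(\nabla T\varphi)}{\partial g}(\dot g)\rangle$, into which (\ref{equation_nablaT_g}) together with the contraction $\langle T\varphi,\cdot\rangle = \langle id,\cdot\rangle$ gives $-\langle\ric, 2\,\delta^*\dot g - \nabla\dot g\rangle$. Summing the three pieces is exactly (\ref{equation_P^4_g}).

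The whole argument is thus a guided bookkeeping once the simultaneous vanishing of $\Pa^2(0,id,h,h)$, $\nabla^h T(id)$ and $d\scal^h$ has been noticed; the substantive input is only the variation formulas (\ref{equation_nablaT_phi})--(\ref{equation_P^2_g}) of the preceding proposition, the scalar-curvature linearisation of \cite[1.174.e)]{MR867684}, and the factorisation of $\Pa^4$. I expect the only delicate points to be the careful handling of the contractions when $T\varphi = id$ --- in particular confirming, with the paper's sign conventions, that $\Se^h$ at $(id,h,h)$ is the Ricci endomorphism, and that $\langle T\varphi,\delta^*\dot g - \tfrac12\nabla\dot g\rangle$ collapses to $\delta^*\dot g - \tfrac12\nabla\dot g$ --- and the rigorous justification that the variations of the divergence $\delta$, of the pulled-back connection, and of the metrics inside the pairings really are annihilated by $\Pa^2(0,id,h,h)=0$ and $\nabla^h T(id)=0$.
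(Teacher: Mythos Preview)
Your proposal is correct and follows exactly the same approach as the paper: both start from the Bianchi factorisation of $\Pa^4$ and then differentiate using (\ref{equation_P^2_phi}), (\ref{equation_nablaT_phi}) for the $\varphi$-variation and (\ref{equation_P^2_g}), \cite[1.174.e)]{MR867684}, (\ref{equation_nablaT_g}) for the $g$-variation. The paper's proof is a two-line sketch (``on montre alors facilement''), whereas you make explicit the key observation that $\Pa^2(0,id,h,h)$, $\nabla^h T(id)$ and $d\scal^h$ all vanish at the base point, which is precisely what justifies ignoring the variations of the outer operators in the Leibniz expansion.
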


\begin{proof}
 Avec l'identit\'e de Bianchi, on obtient
 \begin{equation*}
  \Pa^4(\varphi,g,h) = (\delta d+\frac{2}{3}\scal-\Se^h) \Pa^2(0,\varphi,g,h) + \frac{1}{3} \langle d\scal,T\varphi \rangle_g + 2 \langle \ric,\nabla T\varphi \rangle_g,
\end{equation*}
 on montre alors facilement la premi\`ere \'egalit\'e avec (\ref{equation_P^2_phi}) et (\ref{equation_nablaT_phi}), et la deuxi\`eme avec (\ref{equation_P^2_g}), (\cite[1.174.e)]{MR867684}) et (\ref{equation_nablaT_g}).
\end{proof}
 
\bibliography{biblio}

\begin{thebibliography}{10}

\bibitem{MR2509323}
P.~Albin.
\newblock Renormalizing curvature integrals on {P}oincar\'e-{E}instein
  manifolds.
\newblock {\em Adv. Math.}, 221(1):140--169, 2009.

\bibitem{MR1825268}
Michael~T. Anderson.
\newblock {$L^2$} curvature and volume renormalization of {AHE} metrics on
  4-manifolds.
\newblock {\em Math. Res. Lett.}, 8(1-2):171--188, 2001.

\bibitem{MR1952859}
P.~Baird and D.~Kamissoko.
\newblock On constructing biharmonic maps and metrics.
\newblock {\em Ann. Global Anal. Geom.}, 23(1):65--75, 2003.

\bibitem{MR2449641}
V.~B{\'e}rard.
\newblock Un analogue conforme des applications harmoniques.
\newblock {\em C. R. Math. Acad. Sci. Paris}, 346(17-18):985--988, 2008.

\bibitem{MR867684}
A.~L. Besse.
\newblock {\em Einstein manifolds}, volume~10 of {\em Ergebnisse der Mathematik
  und ihrer Grenzgebiete (3) [Results in Mathematics and Related Areas (3)]}.
\newblock Springer-Verlag, Berlin, 1987.

\bibitem{2011arXiv1112.6130B}
O.~{Biquard} and F.~{Madani}.
\newblock {A construction of conformal-harmonic maps}.
\newblock {\em ArXiv e-prints}, December 2011.

\bibitem{MR1692148}
S.-Y.~A. Chang, L.~Wang, and P.~C. Yang.
\newblock A regularity theory of biharmonic maps.
\newblock {\em Comm. Pure Appl. Math.}, 52(9):1113--1137, 1999.

\bibitem{MR2650244}
Zindine Djadli, Colin Guillarmou, and Marc Herzlich.
\newblock {\em Op\'erateurs g\'eom\'etriques, invariants conformes et
  vari\'et\'es asymptotiquement hyperboliques}, volume~26 of {\em Panoramas et
  Synth\`eses [Panoramas and Syntheses]}.
\newblock Soci\'et\'e Math\'ematique de France, Paris, 2008.

\bibitem{MR0164306}
J.~Eells and J.~H. Sampson.
\newblock Harmonic mappings of {R}iemannian manifolds.
\newblock {\em Amer. J. Math.}, 86:109--160, 1964.

\bibitem{MR0420708}
J.~Eells and J.~C. Wood.
\newblock Restrictions on harmonic maps of surfaces.
\newblock {\em Topology}, 15(3):263--266, 1976.

\bibitem{MR837196}
C.~Fefferman and C.~R. Graham.
\newblock Conformal invariants.
\newblock {\em Ast\'erisque}, (Numero Hors Serie):95--116, 1985.
\newblock The mathematical heritage of \'Elie Cartan (Lyon, 1984).

\bibitem{MR2244375}
A.~R. Gover.
\newblock Laplacian operators and {$Q$}-curvature on conformally {E}instein
  manifolds.
\newblock {\em Math. Ann.}, 336(2):311--334, 2006.

\bibitem{MR1758076}
C.~R. Graham.
\newblock Volume and area renormalizations for conformally compact {E}instein
  metrics.
\newblock In {\em The Proceedings of the 19th Winter School ``Geometry and
  Physics'' (Srn\'\i, 1999)}, number~63, pages 31--42, 2000.

\bibitem{MR2366901}
C.~R. Graham.
\newblock Conformal powers of the {L}aplacian via stereographic projection.
\newblock {\em SIGMA Symmetry Integrability Geom. Methods Appl.}, 3:Paper 121,
  4, 2007.

\bibitem{MR2160867}
C.~R. Graham and K.~Hirachi.
\newblock The ambient obstruction tensor and {$Q$}-curvature.
\newblock In {\em AdS/CFT correspondence: Einstein metrics and their conformal
  boundaries}, volume~8 of {\em IRMA Lect. Math. Theor. Phys.}, pages 59--71.
  Eur. Math. Soc., Z\"urich, 2005.

\bibitem{MR1190438}
C.~R. Graham, R.~Jenne, L.~J. Mason, and G.~A.~J. Sparling.
\newblock Conformally invariant powers of the {L}aplacian. {I}. {E}xistence.
\newblock {\em J. London Math. Soc. (2)}, 46(3):557--565, 1992.

\bibitem{MR1112625}
C.~R. Graham and J.~M. Lee.
\newblock Einstein metrics with prescribed conformal infinity on the ball.
\newblock {\em Adv. Math.}, 87(2):186--225, 1991.

\bibitem{MR1965361}
C.~R. Graham and M.~Zworski.
\newblock Scattering matrix in conformal geometry.
\newblock {\em Invent. Math.}, 152(1):89--118, 2003.

\bibitem{MR2124627}
T.~Lamm.
\newblock Biharmonic map heat flow into manifolds of nonpositive curvature.
\newblock {\em Calc. Var. Partial Differential Equations}, 22(4):421--445,
  2005.

\bibitem{MR2301373}
S.~Montaldo and C.~Oniciuc.
\newblock A short survey on biharmonic maps between {R}iemannian manifolds.
\newblock {\em Rev. Un. Mat. Argentina}, 47(2):1--22 (2007), 2006.

\end{thebibliography}
\bibliographystyle{plain}

\end{document}